\newtheorem{theorem}{Theorem}[section]
\newtheorem{proposition}[theorem]{Proposition}
\newtheorem{corollary}[theorem]{Corollary}
\newtheorem{lemma}[theorem]{Lemma}
\theoremstyle{definition}
\theoremstyle{remark}
\def\be{\begin{eqnarray}}
\def\ee{\end{eqnarray}}
\def\ben{\begin{eqnarray*}}
\def\een{\end{eqnarray*}}
\numberwithin{equation}{section}
\begin{document}

\title{On the Steady State of Continuous Time\\ Stochastic Opinion Dynamics with Power Law Confidence}

\author{
Jae Oh Woo\thanks{Department of Mathematics, and Department of Electrical and Computer Engineering, The University of Texas at Austin. Email: jaeoh.woo@aya.yale.edu}, 
Fran\c{c}ois Baccelli\thanks{Department of Mathematics, and Department of Electrical and Computer Engineering, The University of Texas at Austin. Email: baccelli@math.utexas.edu} 
and Sriram Vishwanth\thanks{Department of Electrical and Computer Engineering, The University of Texas at Austin. Email: sriram@ece.utexas.edu}}
%    Information for first author

%\date{January 1, 2001 and, in revised form, June 22, 2001.}

%\dedicatory{}

%\keywords{}
%\footnote{Some portions of this paper were presented by the authors at the IEEE International Symposia on Information Theory 
%in 2014 and 2015 \cite{WWM14:isit, WM15:isit}.} %The views expressed herein are solely those of the author Jae Oh Woo and do not represent the views of his employer, Moody’s Analytics, its parent company (Moody’s Corporation) or its affiliates.}

\date{\today}

\maketitle
\begin{abstract}
This paper introduces a class of non-linear and continuous-time opinion dynamics model
with additive noise and state dependent interaction rates between agents.
The model features interaction rates which are proportional to a negative power of opinion distances.
We establish a non-local partial differential equation for the distribution of opinion distances
and use Mellin transforms to provide an explicit formula for the stationary solution of the latter,
when it exists. Our approach leads to new qualitative and quantitative
results on this type of dynamics. To the best of our knowledge these Mellin transform results
are the first quantitative results on the equilibria of opinion dynamics with distance-dependent
interaction rates. The closed form expressions for this class of dynamics are obtained for the two agent case.
However the results can be used in mean-field models featuring several agents
whose interaction rates depend on the empirical average of their opinions. 
The technique also applies to linear dynamics, namely with a constant interaction rate,
on an interaction graph.
\end{abstract}

%\section*{This is an unnumbered first-level section head}
%This is an example of an unnumbered first-level heading.

%% The correct journal style for \specialsection is all uppercase; a known bug
%% in amsart.cls prevents this, so input must be uppercase until it is fixed.
%\specialsection*{This is a Special Section Head}
%\specialsection*{THIS IS A SPECIAL SECTION HEAD}
%This is an example of a special section head%
%%%%%%%%%%%%%%%%%%%%%%%%%%%%%%%%%%%%%%%%%%%%%%%%%%%%%%%%%%%%%%%%%%%%%%%%
%\footnote{Here is an example of a footnote. Notice that this footnote
%text is running on so that it can stand as an example of how a footnote
%with separate paragraphs should be written.
%\par
%And here is the beginning of the second paragraph.}%
%%%%%%%%%%%%%%%%%%%%%%%%%%%%%%%%%%%%%%%%%%%%%%%%%%%%%%%%%%%%%%%%%%%%%%%%
%.

\section{Introduction}
It is now well recognized that the polarization of opinions in societies is linked to the
fact that opinion updates are predominantly due to interactions between
persons having already closeby opinions. Several models of opinion dynamics 
incorporate this fact, for instance the bounded confidence model, also known as
the Hegselmann-Krause model \cite{hegselmann2002opinion},
where interactions take place between nodes with opinions less than a given confidence radius.
However none of these models offers an analytical framework allowing
one to quantitatively predict how opinions cluster under such dynamics.

The present paper proposes a new model incorporating the closeby opinion interaction nature by proposing a power law confidence model. This model is stochastic in two ways: 
first, interactions between any pair of agents
take place at epochs of a random point process with a stochastic intensity 
that decreases with the opinion discrepancy between the two agents; secondly, the opinion
of each agent evolves as a diffusion process. The main result of the present paper is 
that when the interaction rate decreases like a power law of the opinion discrepancy,
the model is analytically tractable. This tractability goes with several qualitative
results like, e.g., the conditions for stability, those for the lack of accumulation 
of interactions, or the effect of parameter changes on the stationary distribution when it exists. 

The analytical framework considered in this paper covers both the setting
where interactions between agents happen as an autonomous point process and that
where the rates of interactions depend on the discrepancies between opinions.
In both settings, we represent opinions by the points of the real line,
and we describe the geometry of opinions by the pairwise differences between opinions.
In the first setting, interactions take place independently of opinion values,
whereas in the second, they are governed by the current geometry of opinions in 
addition to governing the evolution of this geometry.
Models in the first class are in fact linear and there is a rich analytical
literature about scaling limits, convergence rate, stationary solutions on the matter
- see, e.g., \cite{toscani2006kinetic, fagnani2008randomized, olshevsky2009convergence, acemoglu2011opinion, acemouglu2013opinion, ghaderi2013opinion, yildiz2013binary}.
As mentioned above, the second class is much more difficult to analyze and the most noticeable results
concern the convergence of the dynamics to fixed points and
estimates on the speed of this convergence \cite{blondel2010continuous, lobel2011distributed,
como2011scaling, mirtabatabaei2012opinion, acemoglu2014state, brugna2015kinetic, baccelli2015pairwise}.

Most papers on the matter bear on the deterministic and discrete-time case.
The present paper is focused on a stochastic, continuous-time model.
The stochasticity first comes from the presence of an additive noise featuring
self-beliefs and represented as an i.i.d. sequence in \cite{baccelli2015pairwise}.
The second source of stochasticity lies the fact that interactions happen at the epochs of a random point
process with a stochastic intensity function of the opinion distances.
In the power law confidence model proposed here, this interaction rate is proportional
to a negative power of the opinion distance. For this parametric setting, one gets a
representation of the dynamics of opinion distances in terms of a stochastic differential
equation, and a non-local partial differential equation for the distribution of this
stochastic process. The main analytical novelty lies in the use of Mellin transforms
to solve the stationary equation in a closed-form. This continuous time parametric model is used
for tractability reasons in the first place; it allows us to use the powerful analytical
framework of diffusion processes, and then of Mellin transforms. The closed forms obtained
are hence directly linked to the specific parametric assumptions made.
However, this model has some practical motivations as well: 
(i) the additive Brownian term can be seen as a diffusive force that is known to be
sufficient for causing {\em opinion polarization} term \cite{baccelli2015pairwise};
(ii) the power law confidence model proposed here is in a sense more natural than the  
the bounded confidence case in that it rarely accepts interactions between distant
opinions as observed in real opinion making;
(iii) in the opinion dependent interaction case, the fact that the dynamics is in continuous time  
leads to interesting new phenomena such as the possibility of a {\em fusion of opinions} 
(which does not happen in discrete time) despite the presence of the diffusive forces as
shown in the present paper. 

Whereas the main result of the present paper is the closed form for the steady state of this dynamics,
the analysis also reveals further interesting qualitative properties.
For instance, for small enough exponents, the power law confidence model forbids opinion fusions
despite interaction rates tending to infinity when the opinions get close; it also leads to {\em weak consensus}
(namely to a stabilization of the distribution of opinion differences)
despite interaction rates vanishing when the distance between opinions tends to infinity. The last
property is in contrast with the polarization of opinions which always arises in the bounded confidence case.

On the more technical side, we already mentioned the role of Mellin transforms.
Such transforms were already used to solve transport equations describing the TCP/IP
protocol \cite{hollot2001control, dumas2002markovian, baccelli2002mean, baccelli2005analysis, baccelli2007equilibria}.
When seeing instantaneous throughput as the analog of opinion distance, and packet losses as 
the analog of interactions, we get a natural connection between the halving of instantaneous
throughput in case of packet loss and the halving of opinion distance in case of agent interactions.
The main novelty regarding this literature is twofold: first, we replace the transport operator
(describing the linear increase of TCP) by a diffusion operator (representing the additive noise);
second, whereas TCP features loss rates that increase with the value of instantaneous throughput,
opinion dynamics feature interactions with a rate that {\em decreases} with opinion distance.
In spite of these differences, we could extend the mathematical machinery from one case to the other.

The structure of the paper is the following. Section \ref{sec2} describes the general framework
of the continuous-time model; this characterizes the dynamics in terms of diffusion with jumps.
It encompasses both 1) the case where the rates of interactions and jumps remain fixed and
2) the case where the rates are opinion dependent. Section \ref{sec2} further gives the main 
results and discusses their implications on opinion dynamics.
In Section 3, we discuss the notions of weak consensus and opinion fusion.
The connections between opinion dependent interactions and the bounded confidence
model are discussed in the beginning of Section \ref{sec:two_agent_model}. Subsection \ref{sec:construction}
discusses the path-wise representation of these diffusion processes with jumps.
Mellin transforms are introduced in Subsection \ref{sec3.4}. Subsections \ref{sec3.5} and \ref{sec3.6}
contain the derivations of the closed form stationary solutions. Section \ref{sec:two_agent_model} 
studies the simplest case, namely the two agent case with real-valued opinions.
We discuss various multidimensional (either more than two agents or opinions
taking their values in the Euclidean space or interactions constrained by a graph)
extensions of the basic framework in Section \ref{multi}.
In particular, we show how to leverage our analysis of both the opinion dependent and opinion independent
cases through a natural mean-field model featuring interactions between a large number of agents.

\section{Main Results on the Opinion Dynamics Model}
\label{sec2}

\subsection{Setting for the Two-Agent Problem}
The opinion of an agent at time $t$ will be denoted by $X(t)$ and is assumed to take its values
in $\mathbb{R}$ (except in Section \ref{multi} where the case of ${\mathbb R}^d$-valued
opinions is discussed). 
This assumption of continuous time and space is in part made for mathematical tractability, as already explained. 
The assumption of opinions with values in a non-compact state space is that in several papers 
\cite{blondel2010continuous, baccelli2015pairwise}.

There are two types of agents: stubborn agents, with a constant opinion, and regular agents.
In the absence of interaction, the opinion $X(t)$ of a regular agent satisfies the following
stochastic differential equation:
\begin{align}\label{eq:sde_without_interaction}
dX(t) = \mu dt + \sigma dW(t),
\end{align}
where $W(t)$ is a standard Brownian motion. We assume that the parameters $\mu$ and $\sigma$
of this diffusion are constant. In this opinion dynamics setting,
the drift parameter $\mu$ can be interpreted as the {\em bias} of the agent
and the diffusion parameter $\sigma>0$ as its {\em self-belief} coefficient.
In some cases, we assume that $\mu=0$.

We focus on pair-wise interactions, i.e., interactions following the gossip model
of \cite{mobilia2007role, acemouglu2013opinion, acemoglu2014state, baccelli2015pairwise}. 
We first solve the simplest problem, which is the two-agent case comprising one regular agent
and one stubborn agent. We consider multi-agent extensions in a second step.

The assumptions of the two-agent model are the following:
\begin{itemize}
\item {\em The stubborn agent $S$ has a fixed opinion} (say 0) at all times regardless of interactions;
\item {\em The opinion of the regular agent $X$} has a diffusive dynamics,
together with jums/updates of its opinion at each of the interaction epochs with the stubborn agent;
\item The interaction times are determined by a point process $N(t)$ with the stochastic intensity
$\Lambda(X(t))$ which is a function of the opinion difference, $X(t)$, of the two agents;
here the stochastic intensity is defined with respect to the natural filtration
of $\{X(s)\}$ (see \cite{baccelli2003}); in the power law model, the interaction rate decreases
with distance, as in, e.g., the bounded confidence model.
\item At an interaction event taking place at time $t$, the regular agent at $X(t)$ incorporates
the opinion of the stubborn agent $S$ by updating its current opinion from $X(t)$ to $X(t)/\theta$,
which is the weighted average of its opinion $X(t)$ and that of $S$, where $\theta>1$.
\item The diffusion represents the autonomous evolution of the agent's opinion in the
absence of interactions (the so-called self-beliefs of, e.g., \cite{baccelli2015pairwise}).
\end{itemize}
We broadly consider two types of interaction functions $\Lambda(X(t))$ according to the dependency of interaction clocks:
\begin{itemize}
\item{\em{Opinion independent interactions}}
: the interaction rate is constant, with $\Lambda(X(t))=\lambda>0$.
\item{\em{Opinion dependent interaction}}: the interaction rate depends on the current geometry of opinion. Several explicit results will be based on the {\em {power law model}}, where
$\Lambda(X(t))=\frac{\lambda}{|X(t)|^{\alpha}}$ for some $\alpha>0$.
\end{itemize}
One can see independent interactions as a domination type interaction where the regular
agent has to incorporate the opinion of the stubborn agent regardless of the discrepancies 
between their opinions.
One can relate the dependent interaction to {\em free will}. The free will of the regular
agent is modeled by the stochastic intensity of $\Lambda(X(t))$. Assume, for instance,
that $\Lambda(X(t))\le \lambda$. Then one can interpret $\lambda$ as the rate of interaction
offers and $\Lambda(X(t))$ as the rate of accepted interactions. In the free will example, 
the regular agent incorporates the opinion of the stubborn agent more likely if this 
opinion has some proximity with its own, and may ignore it otherwise.
The connections with the Hegselmann-Krause model \cite{hegselmann2002opinion}
are discussed in the next section.

\subsection{Related Models and Results}
When there are no random additive self-beliefs, the opinion independent interaction,
discrete-time case, was studied by Fagnani and Zampieri \cite{fagnani2008randomized}. 
The stationary solution was studied using a Markov process analysis. In the opinion dependent
interaction case, such as the bounded-confidence model
\cite{hegselmann2002opinion, como2011scaling, acemoglu2014state, baccelli2015pairwise},
there are no known explicit stationary solutions to the best of our knowledge. 
In the discrete-time case with i.i.d. additive self-beliefs,
it was shown in \cite{baccelli2015pairwise} that, for $\alpha>2$,
the dynamics with such self-beliefs is unstable. 
This means that the distribution of opinion distance is not tight,
a phenomenon that can be interpreted as {\em opinion polarization}.
If $0\le \alpha <2$, then there is a unique stationary regime for opinion distances,
a situation which is interpreted in \cite{baccelli2015pairwise} as a weak form of consensus. 

\subsection{Summary of our Results on the Two-Agent Model} 
The main analytical result of the present paper is the characterization of the stationary distribution of the {\em weak consensus} (Theorem \ref{thm:main2body_dependent}) that arises in the general power-law interaction model for all $\alpha$ in the range $0\le \alpha<1$  (which includes the opinion-independent interaction case as the special case $\alpha=0$). More precisely, the partial differential equation satisfied by the distribution of the density of the solution of the stochastic differential equation admits a unique probabilistic solution given in closed form. 

Interestingly, we can formally extend the stationary solution to this partial differential equation established to the whole range $0\le \alpha<2$
and give a probabilistic interpretation of this solution. However, in the range $1\leq \alpha <2$, the stochastic differential equation is ill-defined. The physical interpretation of this solution is unclear to the authors at this stage. We leave this interpretation as an open problem. We state these different behaviors depending on the range of $\alpha$ in Theorem \ref{thm:alpharegime}.

In the continuous-time case considered here, several interesting new phenomena appear depending on the value range of $\alpha\ge 0$. For $\alpha >2$, an accumulation point of interactions can lead to some {\em fusion of opinions}. Namely, with a positive probability, there is an infinite number of interactions in a finite time with an accumulation point at a random time $T$, and the limit of the opinion of the regular agent tends to that of the stubborn agent as $t\to T$. After this fusion time, the solution of the stochastic differential equation is ill-defined. This is why we use the term fusion rather than a strong consensus. The result of \cite{baccelli2015pairwise} also suggests that when $\alpha>2$ the polarization can also take place with positive probability. For $0\le \alpha<1$, there is no such finite accumulation point of interactions, and the solution of the stochastic differential equation exists for all times. For $1\le \alpha\le 2$, we have no understanding of the path-wise solution of the stochastic differential equation, as already mentioned.

\subsection{Summary of Results on Multi-Agent Models} 

In the opinion independent interaction case, there is no fundamental difficulty extending the
results on the two-agent problem to general multi-agent scenarios, for instance, to interactions on a graph. Depending on the types of interactions \cite{holley1975ergodic, degroot1974reaching}, the stationary distribution on general graph can be represented as a mixture of distributions or an independent sum of distributions involving hitting probability on a graph. Therefore, we shall focus on opinion dependent cases.

For the general power-law interaction case, we show in Section \ref{sec:mean_field} that our results
can be used to analyze a mean-field version of the opinion dependent interaction model.
This version features a single stubborn agent and a collection of $d$ regular agents;
all regular agents have pairwise interactions with the stubborn agent.
All regular agents have the same rate of interaction with the stubborn agent;
this rate is the negative moment of order $\alpha$ of the empirical average of the opinions of the
$d$ regular agents; each agent uses this common stochastic intensity to trigger (conditionally)
independent interactions with the stubborn agent. When $d$ tends to infinity,
we show numerically that the model tends to a mean-field limit. We also show that the properties
of the limit can be analyzed in terms of the opinion-independent model 
and a consistency equation that is shown to have a single solution for $\alpha$ in the
range $0\le \alpha <1$. Interestingly, there is no solution to the consistency equation
of this mean-field limit for $\alpha \ge 1$.

\section{The Two-Agent Stochastic Interaction Model}\label{sec:two_agent_model}

We consider the two agent model with one regular agent $X$ and one stubborn agent $S$.
At each interaction, the regular agent updates its opinion to the average of its opinion and that
of the stubborn agent. Without loss of generality, we assume that $S(t)=\mathbf{s}=0$
for all $t\geq 0$. Fix $\theta>1$ and $\theta'>1$ such that $\frac{1}{\theta} + \frac{1}{\theta'}=1$. 
Then $X(t)$ satisfies the stochastic differential equation:
\begin{align}\label{eq:dynamics}
{\mathrm d}X(t) = \mu {\mathrm d}t + \sigma {\mathrm d}W(t) - \frac{X(t-)}{\theta'}N({\mathrm d}t),
\end{align}
where $X({t-})$ is the left limit of $\{X(s)\}$ at $t$,
and $N$ is a point process on the real line with stochastic intensity
$\Lambda(X({t-}))$ (w.r.t. the natural filtration of $\{X(s)\}$ \cite{baccelli2003}).
The parameter $\theta$ (or $\theta'$) reflects the weighted average used when an interaction occurs. 

For example, if $\theta'=3$, $X(t)$ updates its opinion by weighing
$\frac{1}{\theta}=1-\frac{1}{\theta'}=\frac{2}{3}$ its own opinion and
$\frac{1}{\theta'}$ the other (stubborn) opinion;
so $X(t)=\frac{2}{3}X({t-})+\frac{1}{3}S(t)=\frac{2}{3}X({t-})$ at the moment of an interaction.
The path of $X(t)$ is almost surely continuous except at interaction times, i.e., at the epochs of $N(\cdot)$.
%Figure \ref{figure:twointeractionmodel} depicts the generic interaction model:
%\begin{figure}[h]
%\begin{center}
%\begin{tikzpicture}[box/.style={draw,rounded corners,text %width=3cm,align=center}]
%\node[box] (a) {Non-stubborn $X(t)$};
%\node[box,right=of a] (b) {Stubborn $S_t$};
%\draw[<->] (b) -- (a);
%\end{tikzpicture}
%\begin{tikzpicture}[scale=0.5,->,>=stealth',shorten >=1pt,mindmap,
  %level 1 concept/.append style={level distance=130,sibling angle=30},
  %extra concept/.append style={color=blue!50,text=black}]
%
  %\begin{scope}[mindmap, concept color=orange, text=white]
    %\node [scale = 0.4, concept] at (-3.5,0) (agent1) {Stubborn Agent $S(t)=0$};
  %\end{scope}
%
  %\begin{scope}[mindmap, concept color=blue]
%
% Combinatorics and discrete mathematics 
%\node [scale = 0.4,concept, text=white] at (3.5,0) (agent2) {Non-stubborn Agent $X(t)$} ;
%\end{scope}
% Connections of researchers to applied subfields
%\begin{scope}[draw=black,fill=black, decorate,decoration=circle connection bar]
%\path (agent1) edge (agent2);
%\end{scope}
%\end{tikzpicture}
%\caption{Two agent interaction model}\label{figure:twointeractionmodel}
%\end{center}
%\end{figure}

In order to discuss our analytical solution in a clear form, we will consider
three specific forms of interaction rate $\Lambda(x)$:
\begin{itemize}
\item{\textbf{(C1)}}
$\Lambda(x)=\lambda$ (opinion independent case);
\item{\textbf{(C2)}}
$\Lambda(x)=\frac{\lambda}{|x|^\alpha}$ for $\alpha>0$
(opinion dependent case, power law confidence, unbounded intensity); 
\item{\textbf{(C3)}}
$\Lambda(x)=\min\left\{\frac{\lambda}{|x|^\alpha}, L \right\}$ 
for a large constant $L>0$ 
(opinion dependent case, power law confidence, bounded intensity);
%\item{\textbf{(C4)}} $\lambda(x)$
%but bounded by $K$ for some $K>0$
%(general opinion dependent interactions).
\end{itemize}
Note that {\bf (C1)} is a special case of {\bf (C2)} (when $\alpha=0$).
It is however qualitatively very different. The dependent models {\bf (C2)} and {\bf (C3)}
differ in that the former gives an unbounded intensity and the latter a bounded one.
When the intensity is unbounded, it is not guaranteed that the dynamics (\ref{eq:dynamics})
is well-defined. For instance, if $X(0)=0$, the solution of this stochastic differential equation is ill-defined. This is further discussed in Theorem \ref{thm:separation}.

\subsection{Relationship with the Bounded Confidence Model}\label{sec:relbc}
The models discussed above differ from 
the bounded confidence model \cite{hegselmann2002opinion, deffuant2000mixing}.
The latter assumes that interactions between two agents only occur when the distance between their opinions
is less than some fixed (confidence) range. In the continuous time setting, each agent 
has an exponential clock with a constant rate and when its clock ticks, it averages its opinion with that
of all opinions at a distance less than the confidence range. 
In contrast, Model {\bf (C1)} assumes a constant interaction rate, but has no confidence range limitation. 
Model {\bf (C2)} assumes a gradual decrease of the interaction rate with the opinion distance
but is never zero even for large distances.
As illustrated by the example in Figure \ref{figure:BC}, in terms of interaction rate,
model {\bf (C3)} can be seen as the closest to the bounded confidence model
as it also features a constant interaction rate within a given range.
However, like model {\bf (C2)}, model {\bf (C3)} allows for interactions at all distances.

\begin{figure}[!htb]
\centering
	\includegraphics[scale=0.4]{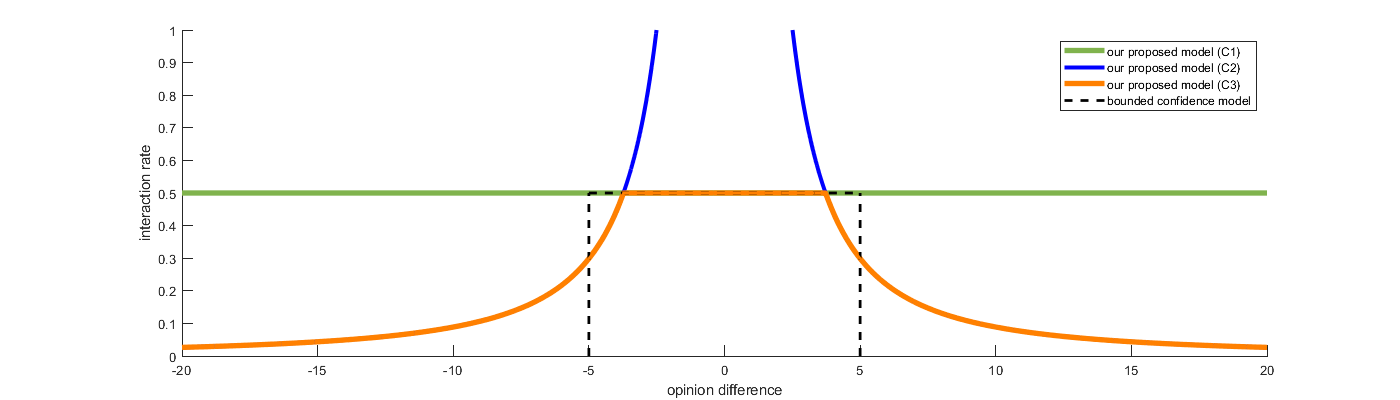} %[width=\linewidth]
	\caption{The bounded confidence model with interaction rate function equal to $0.5$ on $[-5, 5]$
vs the rate functions of the proposed model. 
The parameters are $\lambda= 0.5$ for {\bf (C1)}, $\lambda = 5, \alpha=1.75$,
and $L= 0.5$ for {\bf (C2)} and {\bf(C3)}.}\label{figure:BC}
\end{figure}

In spite of the proximity of the interaction models, in the presence of Brownian self-beliefs,
the bounded confidence model and the power law confidence model differ in a fundamental way.
As we shall see below, the latter is stable (positive recurrent) for small enough values of the
interaction exponent $\alpha$, whereas the former is never stable, whatever the finite confidence range.
This last fact was already observed in \cite{baccelli2015pairwise}
for the discrete time model and immediately follows from the null recurrence of Brownian motion here.
Hence, in spite of the proximity of the interaction functions, bounded confidence always leads
to polarization whereas power law confidence models can lead to weak consensus.

\subsection{Stability and Construction of the Opinion Dependent Dynamics}\label{sec:construction}
In this section, we prove that the solution of the 
stochastic differential equation associated to model {\bf (C2)}
is well-defined when $0\leq \alpha < 1$. For this, we use
the Engelbert-Schmidt 0-1 law.
% to show that the stochastic intensity $\Lambda(x)=\frac{\lambda}{|x|^{\alpha}}$ is almost surely integrable.
Next, we show that, almost surely, no accumulation points of interactions can appear in a finite time
horizon, which allows one to give a path-wise construction 
of the process $X(t)$ for all $t\geq 0$. We then discuss the behavior of the dynamics
when $1\leq \alpha <2$ and $\alpha\geq 2$.

%%%%%%%%%%%%%%%%%%%%%%%%%%%%%%%%%%%%%%%%%%%%%%%%%%%%%%%%%%%%%%%%%%%%%%%%%%%%%%%%%%%%%%%%%
\begin{lemma}[Engelbert-Schmidt 0-1's Law] \label{lem:finite_time_integral}
	Let $\{W(t)\}$ be a standard Brownian motion. Assume that $W(0)=0$. For any $t>0$, 
	\begin{align*}
	\mathbf{P}\left[\int_{0}^{t} \frac{1}{|W(s)|^\alpha}{\mathrm d}s < +\infty \quad \forall 0\leq t <\infty \right] = \begin{cases}
	    1       & \quad \text{if }\ 0\leq \alpha < 1,\\
	    0  & \quad \text{if }\ \alpha\geq 1.\\
	  \end{cases}
	\end{align*}
\end{lemma}
\begin{proof}
See \cite{karatzas2012brownian} or \cite{yen2014local}.
\end{proof}

We now show that Lemma \ref{lem:finite_time_integral}
implies the integrability of the stochastic intensity when $0\le \alpha <1$ .
The first step is:
%Let $W(0)=0$ and $X(0)=x_0$. Let $\gamma_{1}$ be the first interaction time of $X(t)$ from $t=0$. 
%Then we have $X(t) := W(t) + x_0$ for $0\leq t < \gamma_1$ if there is no bias term $\mu$.
%In the next theorem, we consider the stability of the first piece of the opinion process.

\begin{theorem}\label{thm:separation}
Consider model {\bf (C2)} with $\mu=0$ and $0\leq \alpha <1$.
Let $X(0)=x_0$ and let $\gamma_{1}$ be the first interaction time of $X(t)$.
For all $t>0$, there exists a function $\chi(t)>0$ which does not depend on $x_0$
such that (i) $\mathbf{P}\left[ \gamma_1 \geq t \right] \geq \chi(t)$,
(ii) the map $t\to \chi(t)$ is non-increasing, and (iii) $\chi(t)\to 1$ as $t\to 0$.
\end{theorem}

\begin{proof}
Without loss of generality, we may assume that $x_0\geq 0$
(when $x_0\leq 0$, we can apply the symmetry of the Brownian motion).
From the definition of the interaction point process, 
\begin{align*}
\mathbf{P}\left[ \gamma_1 \geq t \right] =
\mathbf{E} \left[ \exp \left\{ -\int_{0}^{t} \frac{\lambda}{|W(s)+x_0|^{\alpha}}{\mathrm d}s  \right\}\right].
\end{align*}
%Given $T>0$, we fix $\delta>0$.
Below, we fix $t>0$ and $\delta>0$, and consider three cases:

\noindent
\textbf{Case I.} $\left[X(0)=x_0=0 \right]$. By Lemma \ref{lem:finite_time_integral},
$\int_{0}^{t} \frac{1}{|W(s)|^{\alpha}}{\mathrm d}s < + \infty$ a.s.. Hence
\begin{align*}
\lim_{t\to 0}\int_{0}^{t} \frac{1}{|W(s)|^{\alpha}}{\mathrm d}s = 0 \quad \text{a.s.}
\end{align*}
Let $\epsilon_1(t):=\mathbf{E}\left[ \exp\left\{-\int_{0}^{t} \frac{\lambda}{|W(s)|^{\alpha}}{\mathrm d}s \right\} \right] \leq 1$.
Since $\mathbf{P}\left[\int_{0}^{t} \frac{1}{|W(s)|^\alpha}{\mathrm d}s < +\infty \right] =1$, we have
$\epsilon_1(t)>0$. 
Note that, by Dominated Convergence,
\begin{align*}
\lim_{t\to 0}\epsilon_1(t) =\mathbf{E}\left[ \exp\left\{- \lim_{t\to 0} \int_{0}^{t} \frac{\lambda}{|W(s)|^{\alpha}}{\mathrm d}s \right\} \right] = 1.
\end{align*}

\noindent
\textbf{Case II.} $\left[X(0)=x_0\geq \delta\right]$.
It is well known that for $\delta>0$, we can find $\epsilon_2(t, \delta)>0$ such that
\begin{align*}
\epsilon_2(t, \delta):=\mathbf{P}\left[ \inf_{0\leq s\leq t} W(s) \geq -\frac{\delta}{2}\right] = \Phi \left( \frac{\delta}{2\sqrt{t}} \right) - \Phi \left( -\frac{\delta}{2\sqrt{t}} \right)>0,
\end{align*}
where $\Phi(\cdot)$ is the cumulative Normal distribution function \cite{borodin2012handbook}.
So $\epsilon_2(t,\delta)> 0$ and
$\epsilon_2(t,\delta)\to 1$ as $t\to 0$.
Since $|x+x_0|\geq \frac{x_0}{2}\geq\frac{\delta}{2}$ for $x\geq -\frac{x_0}{2}$,
conditioned on $\left\{\inf_{0\leq s\leq t} W(s) \geq -\frac{\delta}{2}\right\}$
\begin{align*}
\int_{0}^{t} \frac{1}{|W(s)+x_0|^{\alpha}}{\mathrm d}s \leq \left(\frac{2}{\delta}\right)^{\alpha}t.
\end{align*}
Hence, if $x_0\ge \delta$,
\begin{eqnarray*}
\mathbf{P}\left[ \gamma_1 \geq t \right] &
=&\mathbf{E}\left[ \exp\left\{-\int_{0}^{t} \frac{\lambda}{|W(s)+x_0|^{\alpha}}{\mathrm d}s \right\} \right] \\
&\geq& \mathbf{E}\left[ \exp\left\{-\int_{0}^{t} \frac{\lambda}{|W(s)+x_0|^{\alpha}}{\mathrm d}s \right\} \bigg| \inf_{0\leq s\leq t} W(s) \geq -\frac{\delta}{2} \right] \mathbf{P}\left[ \inf_{0\leq s\leq t} W(s) \geq -\frac{\delta}{2} \right] \\
&\geq& \exp\left\{-\left(\frac{2}{\delta}\right)^{\alpha}\lambda t \right\} \epsilon_2(t, \delta):=\epsilon_3(t, \delta).
\end{eqnarray*}
So $\epsilon_3(t,\delta)> 0$.

\noindent
\textbf{Case III.} $\left[ 0<X(0)=x_0< \delta\right]$. Let $\tau_{0,\delta}$ be the first hitting time of $\{0,\delta\}$ by $W(t)$.
Then by using the Green function formula \cite[Lemma 20.10]{kallenberg2006foundations}, \cite[Lemma 5.4]{ramanan2006reflected},
\begin{align*}
\mathbf{E} \left[ \int_{0}^{\tau_{0,\delta}} \frac{\lambda}{|W(s)+x_0|^{\alpha}}{\mathrm d}s \right] = \int_{0}^{\delta} \frac{\lambda}{y^{\alpha}} g(y){\mathrm d}y,
\end{align*}
where $$g(y)=\frac{2(\min\{x_0, y \} )(\delta - \max\{x_0, y\})}{\delta}.$$
By an elementary calculation, 
\begin{align*}
\mathbf{E} \left[ \int_{0}^{\tau_{0,\delta}} \frac{\lambda}{|W(s)+x_0|^{\alpha}}{\mathrm d}s \right] =
2\lambda\left( \frac{1}{1-\alpha} -\frac{1}{2-\alpha} \right)\left( x_0\delta^{1-\alpha} - x_0^{2-\alpha}\right).
\end{align*}
This function is maximized at $x_0=\delta \left(2-\alpha\right)^{-\frac{1}{1-\alpha}}$. Hence
\begin{align*}
\mathbf{E} \left[ \int_{0}^{\tau_{0,\delta}} \frac{\lambda}{|W(s)+x_0|^{\alpha}}{\mathrm d}s \right] \leq 2\lambda\left( \frac{1}{1-\alpha} -\frac{1}{2-\alpha} \right) \left( (2-\alpha)^{-\frac{1}{1-\alpha}}-(2-\alpha)^{-\frac{2-\alpha}{1-\alpha}}\right)\delta^{2-\alpha} =: \epsilon_4(\delta).
\end{align*}
Notice that for $\delta$ small enough, $\epsilon_4(\delta)<1$.
Then we consider two sub-cases, depending on the order of $t$ and $\tau_{0,\delta}$. 

When $t\geq \tau_{0,\delta}$,
\begin{align*}
\int_{0}^{t} \frac{1}{|W(s)+x_0|^{\alpha}}{\mathrm d}s = \int_{0}^{\tau_{0,\delta}} \frac{1}{|W(s)+x_0|^{\alpha}}{\mathrm d}s + \int_{\tau_{0,\delta}}^{t} \frac{1}{|W(s)+x_0|^{\alpha}}{\mathrm d}s =:\xi.
\end{align*}
By the Strong Markov property of Brownian motion, we can rewrite
\begin{equation*}
\xi = \int_{0}^{\tau_{0,\delta}} \frac{1}{|W(s)+x_0|^{\alpha}}{\mathrm d}s + \int_{0}^{t-\tau_{0,\delta}} \frac{1}{|W'(s)+z'|^{\alpha}}{\mathrm d}s
\leq \int_{0}^{\tau_{0,\delta}} \frac{1}{|W(s)+x_0|^{\alpha}}{\mathrm d}s + \int_{0}^{t} \frac{1}{|W'(s)+z'|^{\alpha}}{\mathrm d}s,
\end{equation*}
where $z'=0$ if $W_{\tau_{0,\delta}}$ is $0$ and $z'=\delta$ otherwise.
Here, $W'(t)$ is an independent Brownian motion with $W_0'=0$.

When $ t<\tau_{0,\delta}$,
\begin{align*}
\int_{0}^{t} \frac{1}{|W(s)+x_0|^{\alpha}}{\mathrm d}s \leq \int_{0}^{\tau_{0,\delta}} \frac{1}{|W(s)+x_0|^{\alpha}}{\mathrm d}s.
\end{align*}
Therefore in both sub-cases,
\begin{align*}
\int_{0}^{t}\frac{1}{|W(s)+x_0|^{\alpha}}{\mathrm d}s \leq \int_{0}^{\tau_{0,\delta}} \frac{1}{|W(s)+x_0|^{\alpha}}{\mathrm d}s + \int_{0}^{t} \frac{1}{|W'(s)+z'|^{\alpha}}{\mathrm d}s .
\end{align*}
We can summarize Case III by
\begin{eqnarray*}
\mathbf{P}\left[ \gamma_1 \geq t\right] & = &
\mathbf{E}\left[ \exp\left\{-\int_{0}^{t} \frac{\lambda}{|W(s)+x_0|^{\alpha}}{\mathrm d}s \right\} \right]\\ 
&\geq & \mathbf{E}\left[ \exp\left\{ -\int_{0}^{\tau_{0,\delta}} \frac{\lambda}{|W(s)+x_0|^{\alpha}}{\mathrm d}s \right\} \right]\mathbf{E}\left[ \exp\left\{ - \int_{0}^{t} \frac{\lambda}{|W'(s)+z'|^{\alpha}}{\mathrm d}s \right\} \right]\\
&\geq & \left( 1-  \epsilon_4(\delta) \right) \min\left\{ \epsilon_1(t), \epsilon_3(t,\delta) \right\} =:\epsilon_5(t,\delta),
\end{eqnarray*}
where we used $\mathbf{E}\left[ e^{-Y} \right] \geq 1-\mathbf{E}\left[Y\right]$ for the last inequality. Putting all three cases together, we conclude that
\begin{align*}
\mathbf{P}\left[ \gamma_1 \geq t \right] \geq \min\{ \epsilon_1(t),\epsilon_3(t,\delta),\epsilon_5(t,\delta) \} =:
\widetilde \epsilon(t,\delta) >0,
\end{align*}
where $\widetilde \epsilon(t,\delta)$ does not depend on $x_0$ and is strictly positive if $\delta$ is small enough.

Let
$$ \chi(t)= 
\begin{cases}
\widetilde \epsilon~(t,t^{1/3}) & \text{ if } \epsilon_4(t^{1/3}) <1\\
1 & otherwise.
\end{cases}
$$
Therefore, we conclude that $\chi(t)$ is strictly positive, non-increasing, and that in addition
$\chi(t)\to 1$ as $T\to 0$.
\end{proof}

%In Theorem \ref{thm:separation} condition (i) ensures that there is no finite accumulation point of
%interactions, whereas conditions (ii) and (iii) remove the possibility that there can exist any infinite interaction cycle at the fixed time $t > 0$.

\begin{corollary}\label{cor:unif_sep}
Under the assumptions of Theorem \ref{thm:separation},
the random variable $\gamma_1$ is stochastically larger than
a random variable $\eta$ with a distribution $H(t):=1-\chi(t)$ on $\mathbb{R}^{+}_0$ which
does not depend on $x_0$. 
\end{corollary}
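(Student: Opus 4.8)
The plan is to deduce the stochastic domination directly from the three properties of $\epsilon(T)$ established in Theorem~\ref{thm:separation}. First I would verify that $H(T) := 1 - \epsilon(T)$ is a bona fide cumulative distribution function on $\mathbb{R}^{+}_{0}$: monotonicity of $H$ follows from the fact that $T \mapsto \epsilon(T)$ is non-increasing (property (ii)); right-continuity is inherited from the explicit form of $\epsilon$, which is built from $\epsilon_1, \epsilon_3, \epsilon_5$ and a deterministic threshold in $\epsilon_4(T^{1/3})$, all of which are continuous in $T$ (one should check that the gluing point where $\epsilon_4(T^{1/3}) = 1$ does not create a jump, or simply note that at worst we may redefine $H$ to be its right-continuous modification without affecting the domination); the normalization $H(T) \to 0$ as $T \to 0$ is exactly property (iii), $\epsilon(T) \to 1$; and $H(T) \to 1$ as $T \to \infty$ holds since $\epsilon(T) \geq 0$ and $H$ is a non-decreasing function bounded by $1$ (if it does not reach $1$ we may place the residual mass at $+\infty$, or equivalently work with $\eta$ taking values in $[0,+\infty]$, which is harmless for a stochastic lower bound).

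Next I would introduce a random variable $\eta$ with distribution function $H$ and observe that property (i) of Theorem~\ref{thm:separation}, namely $\mathbf{P}[\gamma_1 \geq T] \geq \epsilon(T) = 1 - H(T)$ for every $T > 0$, is precisely the statement
\begin{align*}
\mathbf{P}[\gamma_1 \geq T] \;\geq\; \mathbf{P}[\eta \geq T] \qquad \text{for all } T \geq 0,
\end{align*}
which is the definition of $\gamma_1$ being stochastically larger than $\eta$. The case $T = 0$ is trivial since both sides equal $1$. The key point to emphasize is that the bound in Theorem~\ref{thm:separation} is uniform in the initial condition $z_0$: the function $\epsilon(T)$ was constructed so as not to depend on $z_0$, and hence neither does $H$ nor the law of $\eta$. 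This uniformity is what makes the corollary useful for iterating over successive interaction times.

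There is essentially no obstacle here — the corollary is a reformulation of Theorem~\ref{thm:separation} in the language of stochastic order. The only point requiring a line of care is confirming that $H$ can legitimately be taken as a distribution function on $\mathbb{R}^{+}_{0}$; if one is uncomfortable with the possibility that $\sup_T H(T) < 1$, the cleanest fix is to allow $\eta \in [0, +\infty]$ with an atom at $+\infty$ of mass $1 - \sup_T H(T)$, which still yields $\mathbf{P}[\gamma_1 \geq T] \geq \mathbf{P}[\eta \geq T]$ for all finite $T$ and suffices for every subsequent application. I would close by remarking that, combined with the strong Markov property at the interaction epochs and the $z_0$-independence just noted, this domination will later be used to stochastically bound the counting process of interactions over any finite horizon by a renewal process with inter-arrival law $H$, ruling out accumulation points almost surely.
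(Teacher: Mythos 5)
Your proposal is correct and follows essentially the same route as the paper: take $H = 1-\epsilon$ as the distribution function of $\eta$ (allowing values in $\mathbb{R}^+\cup\{\infty\}$ for any residual mass), read the stochastic domination directly off property (i), and note that $z_0$-independence and positivity of $\eta$ come from properties (ii)--(iii). Your version simply spells out the cdf verification that the paper leaves implicit.
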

\begin{proof}
The function $1-\chi(t)$ constructed in the theorem can be taken as the cumulative distribution function
of a random variable $\eta$ on $\mathbb{R}^+\cup \{\infty\}$.
The properties established in the theorem show that (1) $\gamma_1$ is
stochastically larger than $\eta$, (2) the distribution of $\eta$ does not
depend on $x_0$, (3) $\eta$ is strictly positive a.s.
\end{proof}
\iffalse
\begin{cor}\label{cor:positive_sep}
Let $X_0=x_0$. For $0\leq \alpha <1$, there exists an $\epsilon'>0$ such that
\begin{align*}
\mathbf{E}\left[ \gamma_1 \right] \geq \epsilon'>0 ,
\end{align*}
where $\epsilon'$ does not depend on the choice of $x_0$.
\end{cor}
\begin{proof}{Proof of Corollary \ref{cor:positive_sep}.}
Fix any $T>0$. By Theorem \ref{thm:separation}, there exists $\epsilon(T)$ such that $\mathbf{P}\left[ \gamma_1 \geq T \right] \geq T\epsilon(T)$.
\begin{align*}
\mathbf{E}\left[ \gamma_1 \right] \geq  T \mathbf{P}\left[ \gamma_1 \geq T \right] \geq T\epsilon(T)=:\epsilon'>0. 
\end{align*}
\end{proof}
\fi

When $0\leq \alpha<1$, one can build the process in a path-wise sense by induction
on the stopping times $\gamma_n$, $n=1,2,\ldots$ of interactions, the general idea is that the time that elapses between $\gamma_n$ and the first interaction time after
$\gamma_n$ is stochastically larger than a random variable $\eta_n$ with distribution $H(t)$. 
This plus the strong Markov property then imply that $\gamma_n\to \infty$ a.s. as $n\to \infty$.
More precisely, assume that $W(0)=0$. Let $\gamma_0=0$ and $\gamma_1$ be the first interaction time.
Let $\mathcal{F}_t$ be the filtration of $W(t)$ and $N(t)$. Conditioning on $\left\{ X(0)=x_0 \right\}$,
$X(t)=W(t)+x_0$ for all $0\leq t<\gamma_1$, and $\gamma_1$ is an $\mathcal{F}(t)$-stopping time.
Theorem \ref{thm:separation} implies that $\gamma_1$ is a strictly positive random variable a.s.
In addition, there exists a random variable $\eta_1$ with distribution $H$ such that
$\gamma_1\ge \eta_1$ a.s.
On the event $\gamma_1<+\infty$, we define $X({\gamma_1}):=\frac{W({\gamma_1})+x_0}{\theta}$.
Again conditioning on $\left\{ W({\gamma_1})=x_1 \right\}$, by the strong Markov property,
$W({\gamma_1+t})\,{\buildrel d \over =}\, W(t) + x_1$.
Then there exists $\gamma_2>\gamma_1$ and we can define 
\begin{align*}
X(\gamma_1+t) := X({\gamma_1}) + W({\gamma_1+t}) - W({\gamma_1})=\frac{x_0}{\theta}-\frac{x_1}{\theta’}+W({\gamma_1+t}),
\end{align*}
for all $0\leq t<\gamma_2-\gamma_1$. By the same argument as above, there exists a random variable
$\eta_2$ with distribution $H$ such that $\gamma_2-\gamma_1\ge \eta_2$ a.s. 
By the strong Markov property, we can take $\eta_2$ independent of $\eta_1$. 
More generally, one proves by induction the existence of the stopping times
$\gamma_1<\gamma_2<\cdots$ and i.i.d. random variables $\eta_1,\eta_2,\ldots$ 
such that one can construct $X(\gamma_n+t)$ by the formula 
\begin{align*}
X(\gamma_n+t) := X({\gamma_n})+W({\gamma_n+t})-W({\gamma_n}),
\end{align*}
for $0\leq t < \gamma_{n+1}-\gamma_{n}$ and
\begin{align*}
X({\gamma_{n+1}}):=\frac{1}{\theta}X({\gamma_{n+1}-})=\lim_{t\to (\gamma_{n+1}-\gamma_{n})} \frac{1}{\theta} X(\gamma_n+t).
\end{align*}
There remains to prove that
$\gamma_n$ tends to infinity with $n\to \infty$.
Since the sequence $\{\eta_n\}$ is i.i.d., by the strong law of large numbers
\begin{align*}
\gamma_n \ge \sum_{i=0}^{n-1} \eta_i \to \infty \quad\text{as $n\to\infty$}.
\end{align*}
Therefore, the process $X(t)$ is a.s. well-defined for all times $t$.
%$\mathbf{P} \left[ \int_{0}^{\tau_{0,\delta}} \frac{1}{|W(t)+x_0|^{\alpha}}{\mathrm d}t =+\infty\right] > 0$.
%One can directly apply the Green function's formula as shown in the proof in Theorem \ref{thm:separation},
%\begin{align*}
%\mathbf{E} \left[ \int_{0}^{\tau_{0,\delta}} \frac{\lambda}{|W(t)+x_0|^{\alpha}}{\mathrm d}t \right] = +\infty.
%\end{align*}
%We summarize the stability of the dynamics in the following theorem:

\begin{theorem}\label{thm:alpharegime}
Assume that $\Lambda(X(t))=\frac{\lambda}{|X(t)|^{\alpha}}$. For $0\leq \alpha<1$,
there is no finite accumulation time point of interactions almost surely
and the stochastic process $\{X(t)\}$ is well-defined over the whole time horizon.
\end{theorem}

When $\alpha \geq 1$, the proof of Theorem \ref{thm:separation} cannot be adapted.
For instance, the dynamics are always ill-defined when starting from $x_0=0$ since,
by Lemma \ref{lem:finite_time_integral}, for all $t>0$,
\begin{align*}
\mathbf{P}\left[\int_{0}^{t} \frac{\lambda}{|W(s)|^{\alpha}}{\mathrm d}s = +\infty\right] =1.
\end{align*}
For $1\leq \alpha < 2$, the process has no finite accumulation point of interactions almost surely until the first
hitting time of 0 and is path-wise ill-defined after the hitting time. For $\alpha \geq 2$, the dynamics is ill defined even when starting from $x_0\ne 0$ by the law of the iterated logarithm for Brownian motion. See Theorem 5.1 and Corollary 5.3 of \cite{morters2010brownian}. Note that when $x_0=0$, the process is ill-defined by Lemma \ref{lem:finite_time_integral}. The law of the iterated logarithm for Brownian motion implies for any $t>0$, there exists a constant $C>1$ such that $|W(t+h)-W(t)|\leq C|2h\log\log(1/h)|^{\frac{1}{2}}$ almost surely for all $0\leq h \leq \epsilon$ with some $\epsilon>0$. Then for $\alpha\geq 2$ and given the first hitting time $\gamma_1$, \begin{align*}
    |W(\gamma_1)+x_0-W(\gamma_1-h)-x_0| = |W(\gamma_1-h)+x_0| \leq C|2h\log\log(1/h)|^{\frac{1}{2}}.
\end{align*}
Therefore when $x_0\neq 0$, by choosing $\epsilon'<\min\{\epsilon,\gamma_1, 1/e \}$ (where $e$ is Euler constant),
\begin{align*}
    \int_{0}^{\gamma_1} \frac{\lambda}{|W(s)+x_0|^{\alpha}}{\mathrm d}s &\geq  \int_{\gamma_1-\epsilon'}^{\gamma_1} \frac{\lambda}{C^{\alpha}|2(s-\gamma_1)\log\log (1/(s-\gamma_1))|^{\frac{\alpha}{2}}}{\mathrm d}s\\
    &\geq  \int_{0}^{\epsilon'} \frac{\lambda}{C^2|2s\log\log(1/s)|}{\mathrm d}s = +\infty \quad \text{almost surely.}
\end{align*}
Hence when $\alpha\geq 2$, finite accumulations of interactions occur almost surely and the process is path-wise ill-defined in the vicinity of zero.

%\iffalse
%Therefore, in order to guarantee the well-definedness of the opinion dynamics, we consider the stochastic intensity $\Lambda(X(t))$ in both (C2) and (C3) cases:
%\begin{align}\label{eq:stochastic intensity}
%\lambda (X(t)) = \begin{cases}
% \frac{\lambda}{|X(t)|^{\alpha}}       & \quad \text{if } 0\leq \alpha < 1\\
% \min\{ \frac{\lambda}{|X(t)|^{\alpha}}, K \}  & \quad \text{if } \alpha \geq %1,
%\end{cases}
%\end{align}
%for a sufficiently large $K>0$ (and we will take $K\to\infty$).
%\fi

\subsection{Fokker-Planck Evolution Equation}\label{sec:fpe}

We now establish the Kolmogorov forward equation (also referred to as the Fokker-Planck evolution equation)
of the probability density of $X(t)$. This will be done under the following assumptions
(${\bf H}$):
\begin{itemize}
\item[(i)] the function $x\to \Lambda(x)$ is measurable,
\item[(ii)] for all $0\leq a<b<+\infty$, $\int_{a}^{b} \Lambda(X(t)) {\mathrm d}t < +\infty$ almost surely.
\end{itemize}
Then $\Lambda(X(t))$ is predictable and $X(t)$ under \eqref{eq:dynamics} satisfies \cite[Assumption 6.1.1]{bjork2011introduction}. Note that conditions {\bf (C1)} and {\bf (C3)} imply ${\bf H}$.
Under {\bf (C2)}, ${\bf H}$ holds when $0\le \alpha <1$. For the next theorem, we use
the smoothness of the density of $X(t)$ proved in Appendix \ref{sec:appendix1}.

\begin{theorem}\label{thm:evolution}
Assume $\theta>1$. Under ${\bf H}$, the density $p(t,x)$ of $X(t)$ 
satisfies the non-local partial differential equation :
\begin{align}\label{eq_pde}
		\frac{\partial p(t,x)}{\partial t} = \frac{\sigma^2}{2}\frac{\partial^2 p(t,x)}{\partial x^2} - \mu \frac{\partial p(t,x)}{\partial x}  - \Lambda(x) p(t,x) +\theta \Lambda(\theta x) p_t(\theta x).
	\end{align}
\end{theorem}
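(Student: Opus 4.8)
The plan is to test the dynamics against smooth compactly supported functions, run a Dynkin-type (martingale) argument for the jump-diffusion $X(t)$, and then transfer the derivatives onto $p_t$ by integration by parts and a change of variables. Fix $f\in C_c^\infty(\mathbb{R})$. Under Assumption \textbf{H}, the construction of Subsection \ref{sec:construction} (in the C2 case with $0\le\alpha<1$) together with the boundedness of $\lambda$ (in the C1 and C3 cases) guarantees that, on every finite horizon, $N$ has finitely many points a.s., so $X(t)$ is a genuine jump-diffusion and the It\^o formula for processes with jumps applies to $f(X(t))$. Its continuous martingale part is $\int_0^t \sigma f'(X(s))\,dW(s)$, the drift and second-order part contribute $\int_0^t\big(\mu f'(X(s))+\tfrac{\sigma^2}{2}f''(X(s))\big)\,ds$, and each jump at an epoch $s$ of $N$ contributes $f(X(s-)/\theta)-f(X(s-))$, because an interaction sends $X(s-)$ to $X(s-)(1-1/\theta')=X(s-)/\theta$. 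Thus
\[
f(X(t)) = f(X(0)) + \int_0^t\!\Big(\mu f'(X(s))+\tfrac{\sigma^2}{2}f''(X(s))\Big)\,ds + \int_0^t\!\big(f(X(s-)/\theta)-f(X(s-))\big)\,N(ds) + M_t^{(1)} ,
\]
with $M^{(1)}$ the Brownian stochastic integral.

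Next I would compensate the jump term: since $N$ admits the stochastic intensity $\lambda(X(t-))$ w.r.t. the natural filtration, $N(ds)-\lambda(X(s-))\,ds$ is a martingale measure, so the jump integral equals $\int_0^t \lambda(X(s))\big(f(X(s)/\theta)-f(X(s))\big)\,ds + M_t^{(2)}$. Taking expectations kills $M^{(1)}$ (true martingale, as $f'$ is bounded) and $M^{(2)}$, the latter being a true martingale once $\mathbf{E}\int_0^t\lambda(X(s))\,ds<\infty$; this is immediate when $\lambda$ is bounded (C1, C3), and in the C2 case it follows by first localizing at the interaction times $\gamma_n$ (only finitely many interactions occur before a fixed time, by Theorem \ref{thm:alpharegime}) and letting $n\to\infty$ by monotone convergence. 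This yields
\[
\mathbf{E}[f(X(t))] = \mathbf{E}[f(X(0))] + \mathbf{E}\!\int_0^t\!\Big(\mu f'(X(s))+\tfrac{\sigma^2}{2}f''(X(s))+\lambda(X(s))\big(f(X(s)/\theta)-f(X(s))\big)\Big)\,ds .
\]
Writing $\mathbf{E}[g(X(s))]=\int_{\mathbb{R}}g(x)p_s(x)\,dx$ (the density exists for $t>0$ owing to the nondegenerate Brownian part), differentiating in $t$, integrating by parts once in the $\mu f'$ term and twice in the $\tfrac{\sigma^2}{2}f''$ term, and applying the substitution $x\mapsto\theta x$ in $\int\lambda(x)f(x/\theta)p_t(x)\,dx$ (which produces the factor $\theta\lambda(\theta x)p_t(\theta x)$), I obtain
\[
\int_{\mathbb{R}} f(x)\,\partial_t p_t(x)\,dx = \int_{\mathbb{R}} f(x)\Big(\tfrac{\sigma^2}{2}\partial_x^2 p_t(x) - \mu\,\partial_x p_t(x) - \lambda(x)p_t(x) + \theta\,\lambda(\theta x)p_t(\theta x)\Big)\,dx
\]
for all $f\in C_c^\infty(\mathbb{R})$, which is \eqref{eq_pde} in the weak sense, and in the classical sense after invoking interior parabolic regularity.

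The main obstacle is the justification of the expectation step: that the compensated jump integral $M^{(2)}$ is a genuine zero-mean martingale and that $\mathbf{E}[f(X(t))]$ may be differentiated in $t$ under the integral. In the bounded-rate cases (C1) and (C3) this is routine; in the unbounded case (C2) with $0\le\alpha<1$ it requires the a.s.\ finiteness of $\int_0^t\lambda(X(s))\,ds$ from Assumption \textbf{H}(ii) together with the localization at the interaction epochs established in Subsection \ref{sec:construction}, after which dominated/monotone convergence and the fact that $f,f',f''$ are bounded close the argument. A secondary, more technical point is the existence and sufficient smoothness of $p_t$ needed to upgrade the distributional identity to the stated pointwise PDE; this is standard parabolic smoothing coming from the Laplacian term.
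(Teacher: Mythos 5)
Your proof follows essentially the same route as the paper: It\^o's formula for the jump-diffusion, compensation of the jump integral via the stochastic intensity $\lambda(X(t-))$, passage to the (adjoint of the) infinitesimal generator, and the substitution $x\mapsto\theta x$ producing the non-local term $\theta\lambda(\theta x)p_t(\theta x)$. The paper compresses the Dynkin/weak-formulation step into the phrase ``by following the classical approach,'' whereas you spell out the martingale and localization justifications explicitly; the substance is the same.
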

\begin{proof}
We follow the approach described by Bj\"{o}rk \cite[Proposition 6.2.1, Proposition 6.2.2]{bjork2011introduction}.
Under {\bf H}, the stochastic intensity $\Lambda(X(t-))$ is locally integrable and predictable
in the sense of \cite{bjork2011introduction}.
For any function $g:\mathbb{R}\times\mathbb{R}\to \mathbb{R}$
which is in $C^{1,2}$, we have, from Ito's formula:
\begin{align*}
{\mathrm d}g(t,X(t)) =& \left\{\frac{\partial g}{\partial t}(t,X(t))+\mu\frac{\partial g}{\partial x}(t,X(t)) + \frac{\sigma^2}{2}\frac{\partial^2 g_t}{\partial x^2}(t,X(t)) \right\}{\mathrm d}t + \sigma\frac{\partial g}{\partial x}(t,X(t)){\mathrm d}W(t)\\
& + \left(g\left(t,\frac{X(t)}{\theta}\right) - g(t,X(t)) \right)N({\mathrm d}t).
\end{align*}
Then the infinitesimal generator can be described as follows. 
For any function $f:\mathbb{R}\to \mathbb{R}$ which is in $C^{2}$, we have
\begin{align*}
\mathcal{A}f = \mu\frac{\mathrm{d} f}{\mathrm{d} x}(x) + \frac{\sigma^2}{2}\frac{\mathrm{d}^2 f}{\mathrm{d} x^2}(x)
+ \left(f\left(\frac{1}{\theta}x\right)- f(x)\right)\Lambda(x).
\end{align*}
The adjoint operator $\mathcal{A}^*$ is given by
\begin{align*}
\mathcal{A}^*f = -\mu\frac{\mathrm{d} f}{\mathrm{d} x}(x) +
\frac{\sigma^2}{2}\frac{\mathrm{d}^2 f}{\mathrm{d} x^2}(x) +\theta  f\left(\theta x\right)\Lambda\left(\theta x\right)- f(x)\Lambda(x),
\end{align*}
since $\int f\left(\frac{1}{\theta}x\right)h(x) {\mathrm d}x = \int \theta f(x)h(\theta x){\mathrm d}x$ for all $h$. 
By Lemma \ref{lem:smooth_density} in Appendix \ref{sec:appendix1}, the probability density function $p(t,x) \in C^{1,2}$. So the probability density function $p(t,x)$
satisfies $\mathcal{A}^*p(t,x) = \frac{\partial p(t,x)}{\partial t} $.
Therefore we have the forward evolution equation:
\begin{align*}
\frac{\partial p(t,x)}{\partial t} = \frac{\sigma^2}{2}\frac{\partial^2 p(t,x)}{\partial x^2} - \mu \frac{\partial p(t,x)}{\partial x}  - \Lambda(x) p(t,x) +\theta \Lambda(\theta x) p_t(\theta x).
\end{align*}
\end{proof}
In steady state, the density $p(t,x)$ would be invariant over the time $t$.
Therefore $\frac{\partial p(t,x)}{\partial t}=0$.
\begin{corollary}
The stationary distribution, when it exists, satisfies the non-local ordinary differential equation (ODE):
\begin{align}\label{eq_ode}
\sigma^2\frac{{\mathrm d}^2 p(x)}{{\mathrm d} x^2} - 2\mu \frac{{\mathrm d} p(x)}{{\mathrm d} x}  = 2\Lambda(x) p(x) -2\theta \Lambda(\theta x) p(\theta x).
	\end{align}
\end{corollary}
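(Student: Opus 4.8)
The plan is to read off the stationary equation directly from the forward evolution equation \eqref{eq_pde} of Theorem \ref{thm:evolution}. First I would make precise what a stationary distribution means here: a probability law on $\mathbb{R}$, assumed to admit a density $p$, which is invariant for the dynamics \eqref{eq:dynamics}, so that if $X(0)$ has density $p$ then $X(t)$ has density $p$ for every $t\ge 0$. In particular the family of densities $\{p_t\}$ produced by Theorem \ref{thm:evolution} from the initial condition $p_0=p$ is constant in time, hence $\frac{\partial p_t(x)}{\partial t}\equiv 0$. Substituting this together with $p_t=p$ into \eqref{eq_pde} gives
\[
0 = \frac{\sigma^2}{2}\frac{d^2 p(x)}{dx^2} - \mu \frac{d p(x)}{dx} - \lambda(x)p(x) + \theta\,\lambda(\theta x)\,p(\theta x),
\]
and multiplying by $2$ and moving the two interaction terms to the right-hand side yields exactly \eqref{eq_ode}. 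Note that Assumption {\bf H} is in force in the cases C1, C3 and C2 with $0\le\alpha<1$ (as recorded before Theorem \ref{thm:evolution}), so that Theorem \ref{thm:evolution} indeed applies.

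An essentially equivalent route, which I would also mention, bypasses the time-dependent PDE: a stationary density satisfies $\int_{\mathbb{R}}(\mathcal{A}f)(x)\,p(x)\,dx=0$ for every test function $f\in C^2$ with compact support, and the adjoint computation carried out in the proof of Theorem \ref{thm:evolution} (the identity $\int f(x/\theta)h(x)\,dx=\int \theta f(x)h(\theta x)\,dx$ together with integration by parts for the drift and diffusion terms) turns this into $\int_{\mathbb{R}}(\mathcal{A}^*p)(x)\,f(x)\,dx=0$ for all such $f$, i.e.\ $\mathcal{A}^*p=0$, which after multiplication by $2$ and rearrangement is \eqref{eq_ode}.

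The one point requiring care is regularity: writing the second derivative in \eqref{eq_ode} presupposes $p\in C^2$, whereas a priori a stationary distribution is only a measure. Here I would invoke the non-degeneracy of the diffusive part: after moving the bounded zeroth-order term $\lambda(x)p(x)$ and the jump term $\theta\lambda(\theta x)p(\theta x)$ to the right, the stationary equation is a uniformly elliptic second-order ODE with a continuous right-hand side (using continuity of $\lambda$ from Assumption {\bf H}(i), and the fact that any invariant measure is absolutely continuous thanks to the Gaussian smoothing of the semigroup), so elliptic regularity upgrades any distributional solution to a classical $C^2$ one. This regularity bookkeeping is the main, and really the only, nontrivial step; the algebraic passage from \eqref{eq_pde} to \eqref{eq_ode} is otherwise immediate.
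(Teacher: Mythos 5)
Your proposal is correct and matches the paper's (implicit) argument: the corollary is stated without proof precisely because it follows by setting $\frac{\partial p_t}{\partial t}=0$ in \eqref{eq_pde}, multiplying by $2$, and rearranging, which is exactly what you do. Your additional remarks on the adjoint formulation and on $C^2$ regularity are sound and, if anything, more careful than the paper, which simply assumes the stationary density is $C^2$ where needed (e.g.\ in Theorem \ref{thm:main2body_dependent}).
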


%The rest of this section is focused on the explicit solutions to \eqref{eq_ode}. The main difficulty in finding explicit solutions comes from the non-local term $p(\theta x)$.
%However the Mellin transform leads to explicit solutions
%in some of the special cases introduced above.
%We shall also discuss the uniqueness of the solution in each case.

\subsection{Some Tools}
\label{sec3.4}
To find the stationary solution of the stochastic differential equation \eqref{eq:dynamics}
or the ordinary differential equation \eqref{eq_ode}, we will rely on 1) PASTA and 2) Mellin transforms.

\subsubsection{PASTA}
The Poisson Arrivals See Time Averages (PASTA) property of stochastic processes
is well-known in Queuing theory \cite{wolff1982poisson, melamed1990arrivals, melamed1990arrivals2, baccelli2003}.
Let $N(t)$ be a stationary point process with points $\{t_n\}$.
Let $\{\mathcal{F}_t\}$ a filtration such that $N(t)$ is ${\mathcal F}_t$-measurable for all $t$.

\begin{lemma}\label{lem:pasta}
Let $X(t)$ be a $\mathcal{F}_t$-predictable, stationary stochastic process. The sequence
\begin{align*}
Y(n) := X(t_n-), \quad n= \ldots,-2,-1,0,1,2,\ldots
\end{align*}
is stationary.
If the $\mathcal{F}_t$ stochastic intensity of $N$ is constant, 
then the stationary distribution of $Y(n)$ coincides with the stationary distribution of $X(t)$. 
\end{lemma}
\begin{proof}
See \cite[Theorem 3.3.1]{baccelli2003}.
\end{proof}

\subsubsection{Mellin Transform}
The Mellin transform of a non-negative function $f(x)$ on $\mathbb{R}_+=(0,\infty)$ is defined by
\begin{align}\label{eq:mellin}
\mathcal{M}(f;s)=\int_{0}^{\infty} x^{s-1}f(x){\mathrm d}x,
\end{align}
when the integral exists. So the Mellin transform is an extended moment transform of a function $f(x)$.

The integral \eqref{eq:mellin} defines a transform in a vertical strip of the complex $s$ plane. 
Assuming that $\mathcal{M}(f;s)$ is finite for $a<\text{Re}(s)<b$,
the inversion of the Mellin transform is given by
\begin{align*}
f(x)=\frac{1}{2\pi i}\int_{c-\infty i}^{c+\infty i} x^{-s}\mathcal{M}(f;s){\mathrm d}s \quad \text{for } a<c<b.
\end{align*}

We will also leverage the following Euler type identity \cite{baccelli2007equilibria}:
\begin{lemma}\label{lem:infmulsum}
	For any $\theta> 1$,
	\begin{align*}
	\prod_{k=0}^{\infty}\left( 1-\frac{1}{\theta^{s+k}} \right) = \sum_{n= 0}^{\infty} \frac{1}{\theta^{sn}}\prod_{k=1}^{n} \left( \frac{\theta}{1-\theta^k} \right),
	\end{align*}
where by convention $\prod_{k=1}^{0} \left( \frac{\theta}{1-\theta^k} \right) = 1$.
\end{lemma}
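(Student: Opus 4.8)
The statement is a formal power series identity in the variable $q = \theta^{-s}$, valid for $|q| < 1$ (i.e.\ $\mathrm{Re}(s) > 0$), so the plan is to treat both sides as holomorphic functions of $q$ on the unit disk and show they satisfy the same functional equation plus normalization. Write $F(q) = \prod_{k=0}^{\infty}\bigl(1 - q\,\theta^{-k}\bigr)$ for the left-hand side (with $q = \theta^{-s}$, so the $k$-th factor is $1 - \theta^{-(s+k)}$) and $G(q) = \sum_{n=0}^{\infty} q^{n}\prod_{k=1}^{n}\frac{\theta}{1-\theta^{k}}$ for the right-hand side. Both are absolutely convergent for $|q|<1$: the product because $\sum_k \theta^{-k} < \infty$, the sum because the coefficient of $q^n$ is $O\!\bigl(\theta^{-\binom{n}{2}}\bigr)\cdot$(bounded factors), which decays superexponentially.

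The key step is to observe that $F$ satisfies the functional equation
\begin{align*}
F(q) = (1-q)\,F(q/\theta),
\end{align*}
which is immediate by peeling off the $k=0$ factor and reindexing: $\prod_{k=0}^{\infty}(1 - q\theta^{-k}) = (1-q)\prod_{k=1}^{\infty}(1-q\theta^{-k}) = (1-q)\prod_{j=0}^{\infty}(1 - (q/\theta)\theta^{-j})$. I would then show that $G$ satisfies the \emph{same} functional equation. Writing $G(q) = \sum_{n\ge 0} c_n q^n$ with $c_0 = 1$ and $c_n = c_{n-1}\cdot \frac{\theta}{1-\theta^n}$, the relation $G(q) = (1-q)G(q/\theta)$ is equivalent, upon comparing coefficients of $q^n$, to $c_n = c_n\theta^{-n} - c_{n-1}\theta^{-(n-1)}$, i.e.\ $c_n(1 - \theta^{-n}) = -c_{n-1}\theta^{-(n-1)}$, i.e.\ $c_n = c_{n-1}\cdot\frac{-\theta^{-(n-1)}}{1-\theta^{-n}} = c_{n-1}\cdot\frac{\theta}{1-\theta^n}$ after multiplying numerator and denominator by $-\theta^n$. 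This is exactly the recursion defining $c_n$, so $G$ does satisfy $G(q) = (1-q)G(q/\theta)$.

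Finally I would close the argument by iterating: both $F$ and $G$ are analytic at $q=0$ with value $1$, and from $H(q) = (1-q)H(q/\theta)$ one gets $H(q) = \prod_{k=0}^{m-1}(1 - q\theta^{-k})\, H(q/\theta^m)$ for every $m$; letting $m\to\infty$ and using $H(q/\theta^m)\to H(0) = 1$ by continuity, we recover $H(q) = \prod_{k=0}^{\infty}(1-q\theta^{-k}) = F(q)$. Hence $G = F$. The only mildly delicate point is justifying the convergence and the interchange of limit in the last step — but since everything lives on a fixed compact subdisk $|q|\le r < 1$ where the partial products converge uniformly and $H$ is bounded, this is routine. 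An alternative (and perhaps cleaner) route avoids functional equations entirely: expand the product $\prod_{k\ge 0}(1-q\theta^{-k})$ directly by choosing a finite subset $S$ of indices from each factor, obtaining $\sum_{S} (-q)^{|S|}\theta^{-\sum S}$, then group terms by $n = |S|$; the inner sum $\sum_{|S|=n}\theta^{-\sum S}$ over $n$-element subsets of $\{0,1,2,\dots\}$ is the Gaussian binomial--type series $\theta^{-\binom{n}{2}}\big/\prod_{k=1}^n(1-\theta^{-k})$ by the $q$-binomial theorem, and a short rearrangement matches this to $\prod_{k=1}^n \frac{\theta}{1-\theta^k}$. I expect the functional-equation proof to be the shortest to write out rigorously, so that is the one I would present.
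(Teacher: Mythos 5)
Your proof is correct. Note that the paper itself does not prove this lemma at all: it is stated as a known Euler-type identity and attributed to the reference \cite{baccelli2007equilibria}, so there is no in-paper argument to compare against. Your functional-equation route is the standard way to establish such $q$-series identities and is carried out accurately: the substitution $q=\theta^{-s}$, the peeling identity $F(q)=(1-q)F(q/\theta)$, the coefficient computation showing $G$ satisfies the same relation precisely because $c_n=c_{n-1}\,\theta/(1-\theta^{n})$, and the iteration $H(q)=\prod_{k=0}^{m-1}(1-q\theta^{-k})\,H(q/\theta^{m})\to F(q)$ using $H(0)=1$ are all sound. Two small remarks: the convergence is in fact better than you claim --- both sides are entire in $q$, since $|c_n|=O\bigl(\theta^{-n(n-1)/2}\bigr)$ up to a geometric factor, so the identity holds for all $s$ with the product taken over all of $\mathbb{C}$ in the variable $q$, not just $\mathrm{Re}(s)>0$; and your alternative argument via expanding the product over finite index sets and invoking the $q$-binomial theorem is equally valid, though as you say the functional-equation version is the cleaner one to write down. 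Either version would serve as a self-contained proof of the lemma the paper only cites.
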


\subsection{Probabilistic Representation of the Solution under {\bf (C1)}}
\label{sec3.5}
In this section, we assume that {\bf (C1)} holds and that the bias term $\mu$ in \eqref{eq:dynamics}
can be non-zero. A sample path is plotted in Figure \ref{fig:evolution1} for illustration.
\begin{figure}[h]
\centering
	\includegraphics[scale=0.3]{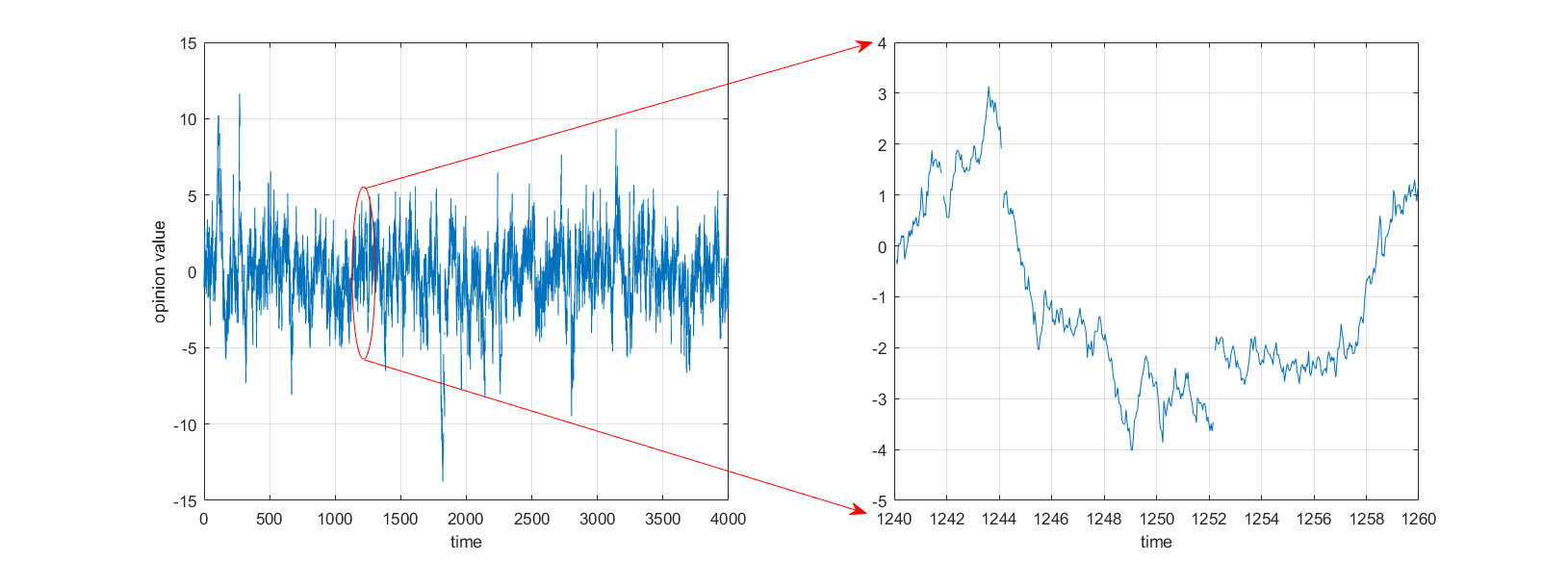} %[width=\linewidth]
	\caption{Evolution of the opinion value when $\mu=0$, $\lambda = 2.0$, $\sigma=3.0$, $\alpha=0$, and $\theta=\theta'=2.0$}
	\label{fig:evolution1}
\end{figure}
%\subsubsection{\textbf{Approach I - Solution by the embedded chain}}

%The first method consists in finding a stochastic solution based on the embedded chain method.
%The second consists in finding an explicit solution by solving the non-local ODE \eqref{eq_ode}.

Let $T=\{t_1,t_2,\cdots\}$ where $t_1\leq t_2\leq \cdots$ denotes the set of epochs of
the interaction Poisson point process of intensity $\lambda$. At time $t_n$, the regular agent
$X$ interacts with the stubborn agent $S$. For each $n$, let $Y(n) :=  \lim_{t\uparrow t_n} X(t)= X({t_n^{-}})$,
denote the state just prior to the interaction time $t_n$.
Let $\Delta t_n := t_{n+1}-t_{n}$.
The sequence $\{\Delta t_n\}$ is an i.i.d. sequence of Exponential$(\lambda)$ random variables.
The following stochastic recurrence equation holds:
\begin{align}\label{eq:stochasticeq}
Y({n+1}) = \frac{1}{\theta} Y(n) + W'(n),
\end{align}
where the sequence $\{W'(n)\}$ is again an i.i.d. sequence of  
random variables with density
% N\left( \mu \Delta T_n, \sigma^2 \Delta T_n \right)$.
$$ h(x)= \int_{0}^\infty 
\frac 1 {\sqrt{2\pi \sigma^2 t}} e^{-\frac{(x-\mu t)^2}{2 \sigma^2 t}}
\lambda e^{-\lambda t} {\mathrm d}t.$$
The sequence $\{Y(n)\}$ is called an embedded chain of $X(t)$. 
From the recurrence equation \eqref{eq:stochasticeq}, we can 
represent the stationary solution $Y$ as
\begin{align}\label{eq:stochastic_solution}
Y = \sum_{n=0}^{\infty} \frac{W'(n)}{\theta^n}.
\end{align}
Consider a stationary Poisson point process with i.i.d. marks.
The mark of point $t_n$ is a standard Brownian motion starting from 0 (only the restriction
of this Brownian motion from time 0 to time $t_{n+1}-t_n$ is useful).
Let $\{{\mathcal F}_t\}$ be the sigma algebra generated
by this marked point process. The stochastic process $X(t)$ is ${\mathcal F}_t$
adapted and also ${\mathcal F}_t$-predictable when assuming its paths are left-continuous.
The ${\mathcal F}_t$-intensity of $N$ is the constant $\lambda$.
It then follows from the PASTA property in Lemma \ref{lem:pasta} that the stationary distribution of $X({t})$
coincides with the distribution of $Y$. Hence, the stationary distribution of $X$ is
a geometric sum of i.i.d. mixtures of Gaussian random variables.
\begin{proposition}\label{pro:probilistic_solution}
Under {\bf (C1)} the stationary distribution of $X$ is that of the sum 
\begin{align}\label{eq:stochasticsolution}
\sum_{j=0}^{\infty} \frac{V({j})}{\theta^j},
\end{align}
where the $V(j)$'s are i.i.d. mixtures of Gaussians with 
density $h$.
\end{proposition}
\begin{figure}[h]
\centering
	\includegraphics[scale=0.3]{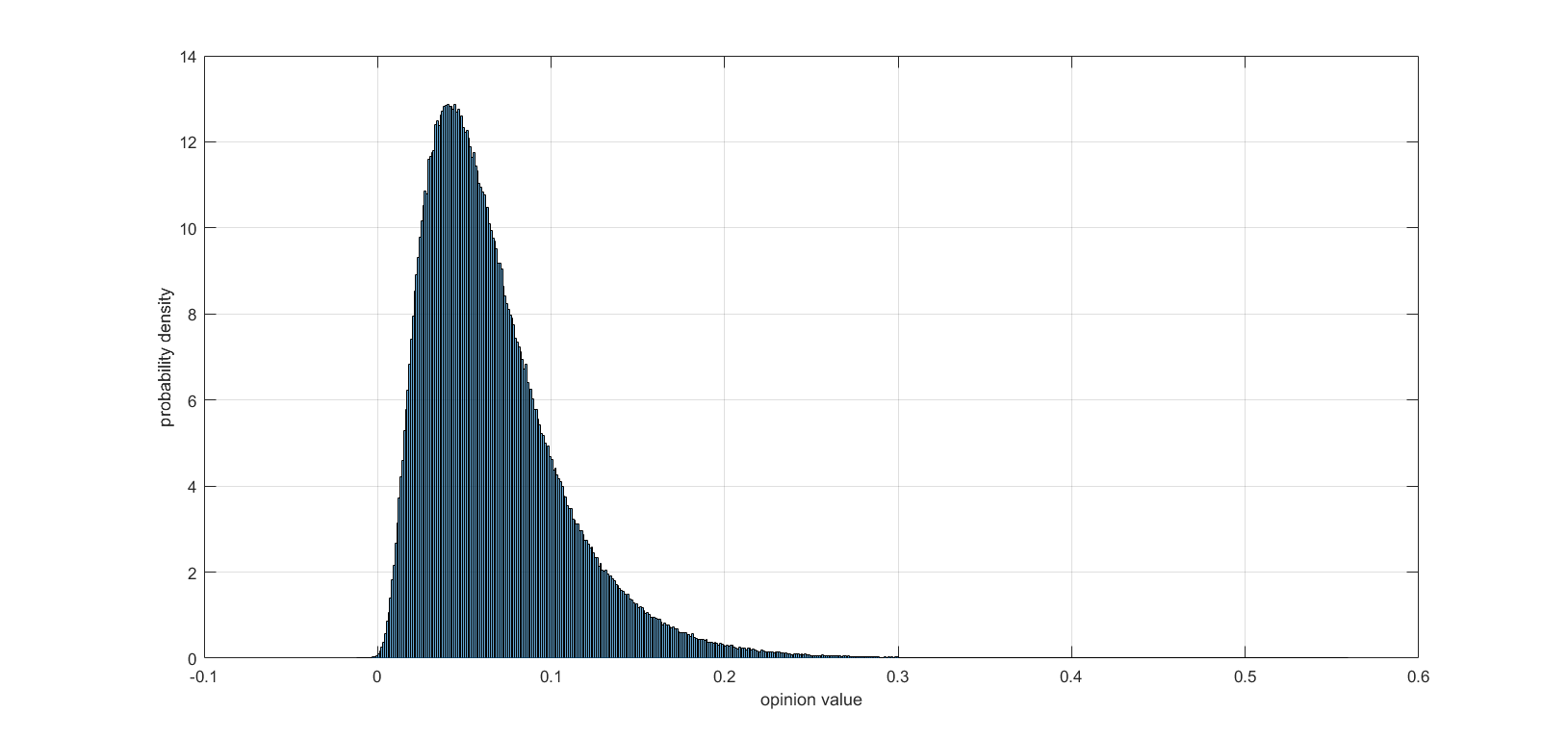} %[width=\linewidth]
	\caption{Simulated histogram when $\lambda = 3.0$, $\sigma=0.02$, $\mu=0.1$, $\alpha=0$, and $\theta=\theta'=2.0$}
	\label{fig:evolution10}
\end{figure}
\begin{corollary}
The characteristic function of the stationary distribution of $X$ is
\begin{align*}
\mathbf{E}\left[e^{i\xi X({+\infty})}\right] = \prod_{j=0}^{\infty}\left[ \frac{\lambda}{\lambda-\frac{i\mu \xi}{\theta^j} +\frac{\sigma^2 \xi^2}{2\theta^{2j}}} \right].
\end{align*}
\end{corollary}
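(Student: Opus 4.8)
The plan is to derive this directly from the preceding Proposition, which already represents $X(\infty)$ in law as the a.s.\ convergent series $\sum_{j=0}^{\infty} V(j)/\theta^j$ with the $V(j)$ i.i.d.\ of density $h$. Since the summands are independent, the characteristic function factors,
\begin{align*}
\mathbf{E}\left[e^{i\xi X_{\infty}}\right] = \prod_{j=0}^{\infty} \mathbf{E}\left[e^{i(\xi/\theta^j) V(j)}\right] = \prod_{j=0}^{\infty} \phi\!\left(\frac{\xi}{\theta^j}\right), \qquad \phi(\xi) := \mathbf{E}\left[e^{i\xi V}\right],
\end{align*}
so everything reduces to computing $\phi$ and checking that the two limiting operations (the a.s.\ convergence of the series and the convergence of the infinite product) are legitimate.

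To compute $\phi$, I would use the mixture representation $V \,{\buildrel d \over =}\, \mu T + \sigma\sqrt{T}\,Z$ underlying the density $h$, with $T\sim\mathrm{Exp}(\lambda)$ and $Z\sim\mathcal{N}(0,1)$ independent. Conditioning on $T=t$, the law of $V$ is Gaussian with mean $\mu t$ and variance $\sigma^2 t$, hence $\mathbf{E}[e^{i\xi V}\mid T=t] = \exp\{i\mu t\xi - \tfrac12\sigma^2 t\xi^2\}$. Because $\mathrm{Re}(-i\mu\xi + \tfrac12\sigma^2\xi^2) = \tfrac12\sigma^2\xi^2 \ge 0$, the Laplace integral against the $\mathrm{Exp}(\lambda)$ density converges and
\begin{align*}
\phi(\xi) = \int_0^\infty e^{i\mu t\xi - \frac{\sigma^2 t\xi^2}{2}}\,\lambda e^{-\lambda t}\,dt = \frac{\lambda}{\lambda - i\mu\xi + \frac{\sigma^2\xi^2}{2}}.
\end{align*}
Substituting $\xi\mapsto\xi/\theta^j$ and multiplying over $j\ge 0$ gives the stated product.

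For the justifications, I would first note $\mathbf{E}|V| \le \mu\,\mathbf{E}[T] + \sigma\,\mathbf{E}[\sqrt{T}]\,\mathbf{E}|Z| < \infty$, so that $\sum_j \theta^{-j}\mathbf{E}|V| < \infty$ forces $\sum_j V(j)/\theta^j$ to converge absolutely a.s.\ and in $L^1$; the $L^1$ convergence of the partial sums lets one pass to the limit inside $\mathbf{E}[e^{i\xi\,\cdot}]$, realizing the factorization above as a genuine limit of the finite products $\prod_{j=0}^M \phi(\xi/\theta^j)$. For the infinite product itself, for each fixed $\xi$ one has $|1-\phi(\xi/\theta^j)| = O(\theta^{-j})$, so $\sum_j|1-\phi(\xi/\theta^j)|<\infty$ and the product converges (to a nonzero value). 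I do not expect a genuine obstacle here: the corollary is essentially the Proposition plus a one-line Laplace-transform identity, and the only mild care needed is the tail bound that simultaneously yields a.s.\ convergence of the defining series and convergence of the infinite product.
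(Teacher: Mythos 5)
Your proof is correct and is essentially the argument the paper intends (the corollary is stated without proof, as an immediate consequence of the series representation $X(\infty)=\sum_j V(j)/\theta^j$): factor the characteristic function by independence and compute $\mathbf{E}[e^{i\xi V}]$ by conditioning on the exponential mixing time. The convergence justifications you add are sound and only make explicit what the paper leaves implicit.
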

Note that each of the summands in
\eqref{eq:stochasticsolution} (before scaling by $\theta^j$) follows an i.i.d. geometric
stable distribution (Linnik distribution), and that the stationary distribution is a
geometric sum of i.i.d. geometric stable random variables. Figure \ref{fig:evolution10} 
plots the density of this distribution. 

\subsection{Analytical Solution of {\bf {(C2)}} without Bias Term}
In this section, we consider the case {\bf (C2)} under the assumption that the bias term is 0.
We solve the ordinary differential equation \eqref{eq_ode} by leveraging Mellin transforms.
While the definition of the process can only be granted when $0\leq \alpha <1$,
we nevertheless consider the case $0\leq \alpha < 2$ when solving Equation \eqref{eq_ode}.

For comparison to the opinion independent dynamics, we plot a sample of the opinion dependent
process in Figure \ref{fig:evolution3}. The main qualitative difference with the opinion dependent
process is that the interaction is more frequent around the zero opinion value (the stubborn agent opinion),
so that the process can hardly escape the vicinity of the stubborn agent opinion.
\begin{figure}[h]
\centering
	\includegraphics[scale=0.3]{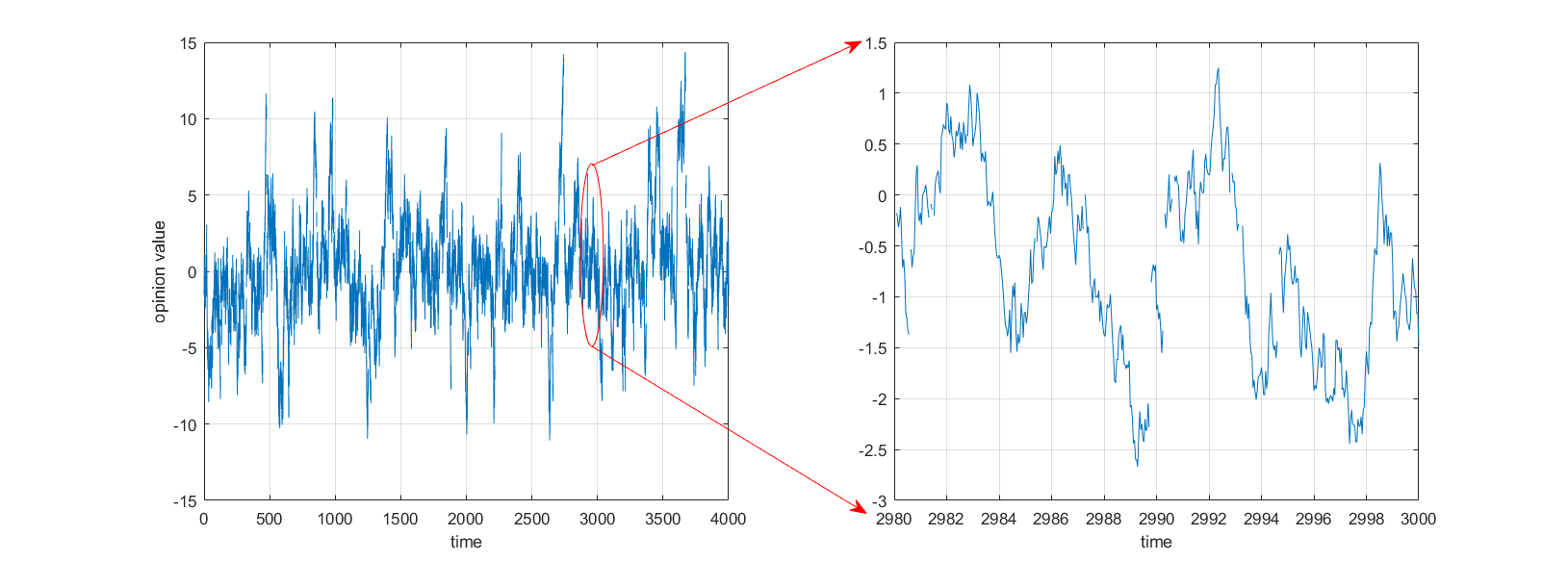} %[width=\linewidth]
	\caption{Evolution of the opinion when $\lambda =2.0$, $\sigma=3.0$, $\alpha=0.5$, and $\theta=\theta'=2.0$}
	\label{fig:evolution3}
\end{figure}

\begin{comment}
As discussed in Section \ref{sec:construction}, under {\bf (C2)}, for $0\le \alpha< 1$,
the solution of the stochastic differential equation can be constructed path-wise, and
any stationary density for the dynamics satisfies the ordinary differential equation (\ref{eq_ode}) provided it is $C^2$.
In contrast, under {\bf (C2)}, for $\alpha\geq 1$, we have no
certitude that the solution of the stochastic differential equation is well defined path-wise.
In addition we have no direct connections
between the path-wise and the ODE (\ref{eq_ode}) as the stochastic intensity is not integrable when starting from 0. 

Nevertheless, the following general result holds on the solutions of 
the ODE (\ref{eq_ode}): 
\end{comment}

\begin{theorem}\label{thm:main2body_dependent}
Consider case {\bf {(C2)}} with $\theta>1$, $\mu=0$, and $0\leq \alpha <2$. 
The unique density $p(x)$ in $\mathbb R$ which is $C^2$ and solution 
of the ordinary differential equation \eqref{eq_ode} is
	\begin{align*} 
	p(x)=2\phi (2-\alpha) \sum_{n=0}^{\infty} \frac{a_n}{\theta^{n}} \sqrt{\left(\frac{2\lambda }{\sigma^2(2-\alpha)^2}\right)^{\frac{1}{2-\alpha}} \theta^{n} |x| }\mathrm{BesselK}\left(\frac{1}{2-\alpha},2\sqrt{\frac{2\lambda \theta^{n(2-\alpha)}|x|^{2-\alpha}}{\sigma^2(2-\alpha)^2}} \right),
	\end{align*}
where
$$
a_0=1,\quad
a_n=\prod_{k=1}^{n} \left(\frac{\theta^{2-\alpha}}{1-\theta^{k(2-\alpha)}} \right),\quad \quad
\phi = \frac{\left( \frac{2\lambda}{\sigma^2(2-\alpha)^2} \right)^{\frac{1}{2-\alpha}}}{2\Gamma\left(\frac{1}{2-\alpha}\right)\Gamma\left(\frac{2}{2-\alpha}\right)} \left(\prod_{k=0}^{\infty}\left( 1- \frac{1}{\theta^{2+k(2-\alpha)}} \right)\right)^{-1},$$ 
and
$$\mathrm{BesselK}(\nu,z)=\frac{\Gamma\left(\nu+\frac{1}{2}\right)(2z)^\nu}{\sqrt{\pi}}\int_{0}^{\infty}\frac{\cos t}{(t^2+z^2)^{\nu+\frac{1}{2}}}{\mathrm d}t.$$
\end{theorem}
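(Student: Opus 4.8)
The plan is to imitate the proof of Theorem \ref{thm:main2body} step by step, replacing the constant interaction rate $\lambda$ by $\lambda/|x|^\alpha$ and tracking the resulting change in the Mellin recurrence. First I would split $p(x)=p_+(x)+p_-(x)$ with $p_+(x)=p(x)\mathbf{1}_{\{x\geq 0\}}$; by the symmetry of the ODE \eqref{eq_ode} under $x\mapsto -x$ (the coefficients $\lambda(x)=\lambda/|x|^\alpha$ and $\lambda(\theta x)$ are even and $\mu=0$), it suffices to solve
\begin{align*}
\sigma^2 \frac{d^2 p_+(x)}{dx^2} = \frac{2\lambda}{x^\alpha} p_+(x) - 2\theta \cdot \frac{\lambda}{(\theta x)^\alpha} p_+(\theta x) = \frac{2\lambda}{x^\alpha}\left(p_+(x) - \theta^{1-\alpha} p_+(\theta x)\right)
\end{align*}
for $x>0$. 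Applying the Mellin transform $M(s):=\mathcal{M}(p_+;s)$ and using $\mathcal{M}(p_+''; s) = (s-1)(s-2) M(s-2)$ together with $\mathcal{M}(x^{-\alpha} p_+; s) = M(s-\alpha)$ and $\mathcal{M}(x^{-\alpha} p_+(\theta \cdot); s) = \theta^{-(s-\alpha)} M(s-\alpha)$, I get a recurrence relating $M(s)$ at shift $2$ to $M$ at shift $\alpha$. Reindexing so the shifts line up (this is where the exponent $2-\alpha$ naturally appears), the recurrence becomes, for $M$ evaluated along the arithmetic progression with step $2-\alpha$,
\begin{align*}
\sigma^2 s(s+1) M(s) = 2\lambda\left(1 - \theta^{-(s+2-\alpha)+1}\right) M(s+2-\alpha) \cdot (\text{constant adjustments}),
\end{align*}
i.e. a first-order linear recurrence in the variable $s$ with step $2-\alpha$ and a $\theta$-geometric coefficient, exactly as in Theorem \ref{thm:main2body} but with $2$ replaced by $2-\alpha$ in all the relevant places.

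**Solving the recurrence and inverting.**
Next I would make the substitution $M(s) = f(s)\,\Gamma\!\big(\tfrac{s}{2-\alpha}\big)\,\Gamma\!\big(\tfrac{s+1}{2-\alpha}\big)\,c^{s/(2-\alpha)}$ for the appropriate constant $c = \sigma^2(2-\alpha)^2/(2\lambda)$, chosen so that the two $\Gamma$-factors absorb the polynomial factor $s(s+1)$ under the shift $s\mapsto s+2-\alpha$ (using $\Gamma(z+1)=z\Gamma(z)$ twice). This reduces the recurrence to $f(s) = \big(1 - \theta^{-(s+1)}\big) f(s+2-\alpha)$ — note the $\theta^{-(s+1)}$, which is why the inner exponent stays $2+k(2-\alpha)$ rather than becoming symmetric. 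The general $(2-\alpha)$-periodic solution is $f(s) = \phi(s)\prod_{k\ge 0}\big(1-\theta^{-(s+1+k(2-\alpha))}\big)$, and as before boundedness and analyticity force $\phi(s)\equiv\phi$ constant. The normalization $M(1)=\tfrac12$ (since $p$ integrates to $1$ and $p_+=p_-$ reflected) then pins down $\phi$ to the stated value involving $\Gamma\!\big(\tfrac{1}{2-\alpha}\big)\Gamma\!\big(\tfrac{2}{2-\alpha}\big)$ and the infinite product. Finally I would apply Lemma \ref{lem:infmulsum} to expand $\prod_{k\ge 0}(1-\theta^{-(s+1+k(2-\alpha))})$ as $\sum_{n\ge 0}\theta^{-n(s+1)}a_n$ with $a_n=\prod_{k=1}^n\theta^{2-\alpha}/(1-\theta^{k(2-\alpha)})$, then invert the Mellin transform term by term. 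Each term is, up to the $\theta$-scaling, the inverse Mellin transform of $\Gamma\!\big(\tfrac{s}{2-\alpha}\big)\Gamma\!\big(\tfrac{s+1}{2-\alpha}\big)$ times a power of $x$; this is a standard Mellin–Barnes representation of a modified Bessel function $K_{1/(2-\alpha)}$, which produces the $\sqrt{\cdot}\,\mathrm{BesselK}(\cdot,\cdot)$ factor in the statement. I would verify the Bessel identification via the integral formula for $\mathrm{BesselK}$ quoted at the end of the theorem, or equivalently via the known formula $\int_0^\infty x^{s-1}K_\nu(2\sqrt{x})\,dx = \tfrac14\Gamma\!\big(\tfrac{s+\nu}{1}\big)\Gamma\!\big(\tfrac{s-\nu}{1}\big)$ after a change of variables $x\mapsto x^{2-\alpha}$.

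**Main obstacle.**
The routine parts are the Mellin bookkeeping and the $\Gamma$-function substitution; the genuine difficulties are twofold. First, the term-by-term inversion must be justified — I need absolute convergence of $\sum_n a_n\theta^{-n} \times (\text{Bessel term})$ uniformly enough to interchange the sum with the Mellin–Barnes contour integral; since $a_n$ decays super-geometrically (the denominators $1-\theta^{k(2-\alpha)}$ grow like $\theta^{k(2-\alpha)}$) and $K_\nu$ decays exponentially in its argument, this should go through, but it requires care about the location of the contour $c$ relative to the poles of $\Gamma(s/(2-\alpha))\Gamma((s+1)/(2-\alpha))$. Second, and more subtly, I must confirm that the constructed $p(x)$ is genuinely a probability density — nonnegative and integrable — rather than merely a formal solution of the recurrence; nonnegativity is not obvious from the alternating-looking series but follows because each summand $\tfrac{a_n}{\theta^n}\sqrt{\cdot}\,K_{1/(2-\alpha)}(\cdot)$, once one checks the sign of $a_n$, is a positive multiple of a positive function (and for $0\le\alpha<1$ this is forced anyway since the SDE has a genuine stationary law). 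The cleanest route is to observe that the recurrence together with $M(1)=1/2$ has a unique solution in the appropriate class, so the density produced must coincide with the (nonnegative) stationary density guaranteed by the construction of Section \ref{sec:construction} when $0\le\alpha<1$, and then to note that the closed-form expression extends analytically in $\alpha$ to the whole range $0\le\alpha<2$.
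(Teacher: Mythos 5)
Your proposal follows essentially the same route as the paper's proof: the same even/odd splitting $p=p_++p_-$, the same Mellin recurrence reduced via the substitution $M(s)=f(s)\,\Gamma\!\left(\tfrac{s}{2-\alpha}\right)\Gamma\!\left(\tfrac{s+1}{2-\alpha}\right)\left(\tfrac{\sigma^2(2-\alpha)^2}{2\lambda}\right)^{s/(2-\alpha)}$ to $f(s)=\left(1-\theta^{-(s+1)}\right)f(s+2-\alpha)$, the same infinite-product solution with $\phi$ constant and normalization $M(1)=\tfrac12$, and the same term-by-term inversion via Lemma \ref{lem:infmulsum} and the Mellin--Barnes representation of $K_{1/(2-\alpha)}$. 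The only blemishes are cosmetic: your intermediate recurrence display carries a misplaced $\alpha$ in the $\theta$-exponent (corrected by the time you state the $f$-recurrence), and the quoted Mellin pair for $K_\nu(2\sqrt{x})$ should read $\tfrac12\Gamma\!\left(s+\tfrac{\nu}{2}\right)\Gamma\!\left(s-\tfrac{\nu}{2}\right)$; neither affects the argument.
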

\begin{proof}
	We divide $p(x)$ into two components. $p(x)=p_+(x)+p_-(x)$ where $p_+(x)=p(x)\mathbf{1}_{\{x\geq 0 \}}$ and $p_-(x)=p(x)\mathbf{1}_{\{x< 0 \}}$. It is easy to see that each component satisfies the same equation and $p_+(x)=p_-(-x)$ for $x>0$. So by symmetry, it suffices to solve the equation for $p_+(x)$, that is
	\begin{align*}
		\sigma^2x^{\alpha}\frac{d^2 p_+(x)}{d x^2} = 2\lambda p_+(x) -2\theta^{1-\alpha}\lambda p_+(\theta x).
	\end{align*}
	Let $M(s):=\mathcal{M}\left( p_+(x) ;s \right)$, where $\mathcal{M}\left(p_+(x);s\right)$ is Mellin transform with respect to $s$. By transforming the equation, we have
	\begin{align*}
		\sigma^2 (s+\alpha)(s+1+\alpha)M(s+\alpha)=2\lambda \left(1-\frac{1}{\theta^{s+1+\alpha}}\right) M(s+2).
	\end{align*}
	Let $M(s)=f(s)\Gamma\left( \frac{s}{2-\alpha} \right)\Gamma\left( \frac{s+1}{2-\alpha}\right)\left(\frac{\sigma^2(2-\alpha)^2}{2\lambda}\right)^{\frac{s}{2-\alpha}}$ with introducing $f(s)$ another yet analytic function. The above equation can be re-written as
	\begin{align*}
		f(s)=\left( 1-\frac{1}{\theta^{s+1}}\right) f(s+2-\alpha).
	\end{align*}
	Since $2-\alpha>0$, we may expand $f(s)$ toward increasing direction of $s$ into infinitely many times. By introducing an indefinite constant $\phi>0$, $f(s)$ can have the form
	\begin{align*}
		f(s) = \phi \prod_{k=0}^{\infty} \left( 1- \frac{1}{\theta^{s+1+k(2-\alpha)}} \right).
	\end{align*}
	By plugging $f(s)$ into $M(s)$ above, we have the form of $M(s)$
	\begin{align}\label{eq_mellin}
		M(s)=\phi\Gamma\left( \frac{s}{2-\alpha} \right)\Gamma\left( \frac{s+1}{2-\alpha}\right)\left(\frac{\sigma^2(2-\alpha)^2}{2\lambda}\right)^{\frac{s}{2-\alpha}} \prod_{k=0}^{\infty} \left( 1- \frac{1}{\theta^{s+1+k(2-\alpha)}} \right).
	\end{align}
	Since $p(x)$ is a probability density and $p_+(x)=p_-(-x)$ for $x>0$, $M(1)=\frac{1}{2}$. This implies
	\begin{align*}
		\phi = \frac{\left( \frac{2\lambda}{\sigma^2(2-\alpha)^2} \right)^{\frac{1}{2-\alpha}}}{2\Gamma\left(\frac{1}{2-\alpha}\right)\Gamma\left(\frac{2}{2-\alpha}\right)} \left(\prod_{k=0}^{\infty}\left( 1- \frac{1}{\theta^{2+k(2-\alpha)}} \right)\right)^{-1}.
	\end{align*}
	Applying Lemma \ref{lem:infmulsum}, the inverse Mellin transform, to \eqref{eq_mellin}, and the residue theorem guides us to reach the solution.
	\begin{align*}
		&p_+(x)\\
		=&\frac{1}{2\pi i}\int_{c-i\infty}^{c+i\infty} x^{-s}\phi\Gamma\left( \frac{s}{2-\alpha} \right)\Gamma\left( \frac{s+1}{2-\alpha}\right)\left(\frac{\sigma^2(2-\alpha)^2}{2\lambda}\right)^{\frac{s}{2-\alpha}} \prod_{k=0}^{\infty} \left( 1- \frac{1}{\theta^{s+1+k(2-\alpha)}} \right) ds\\
		=&\frac{1}{2\pi i}\int_{c-i\infty}^{c+i\infty} x^{-s}\phi\Gamma\left( \frac{s}{2-\alpha} \right)\Gamma\left( \frac{s+1}{2-\alpha}\right)\left(\frac{\sigma^2(2-\alpha)^2}{2\lambda}\right)^{\frac{s}{2-\alpha}} \sum_{n=0}^{\infty} \frac{1}{\theta^{n\left(s+1\right)}}\prod_{k=1}^{n} \left(\frac{\theta^{2-\alpha}}{1-\theta^{k(2-\alpha)}} \right) ds\\
%		&=\sum_{n=0}^{\infty} \sum_{m=0}^{\infty} \left(\frac{\sqrt{2\lambda}x}{\sigma} \right)^m \frac{(-1)^m}{m!} \phi(-m)\frac{1}{2^{-mn+n}}\prod_{k=1}^{n} \left(\frac{4}{1-4^k} \right)\\
		=&\sum_{n=0}^{\infty} \frac{2\phi (2-\alpha)a_n}{\theta^{n}} \sqrt{\left(\frac{2\lambda }{\sigma^2(2-\alpha)^2}\right)^{\frac{1}{2-\alpha}} \theta^{n} |x| }\text{BesselK}\left(\frac{1}{2-\alpha},2\sqrt{\frac{2\lambda \theta^{n(2-\alpha)}|x|^{2-\alpha}}{\sigma^2(2-\alpha)^2}} \right),
	\end{align*}
	where $a_n=\prod_{k=1}^{n} \left(\frac{\theta^{2-\alpha}}{1-\theta^{k(2-\alpha)}} \right)$. We note that we use the change of variable by $s'=\frac{s}{2-\alpha}$ at the step of the inverse Mellin transform, and the following observation for $a>0$:
	\begin{align*}
	\frac{1}{2\pi i}\int_{c-i\infty}^{c+i\infty} x^{-s}\Gamma(s)\Gamma(s+a)ds = 2x^{\frac{1}{2}a}\text{BesselK}\left(a,2\sqrt{x}\right).
	\end{align*}
\end{proof}
\begin{figure}[!htb]
\centering
	\includegraphics[scale=0.3]{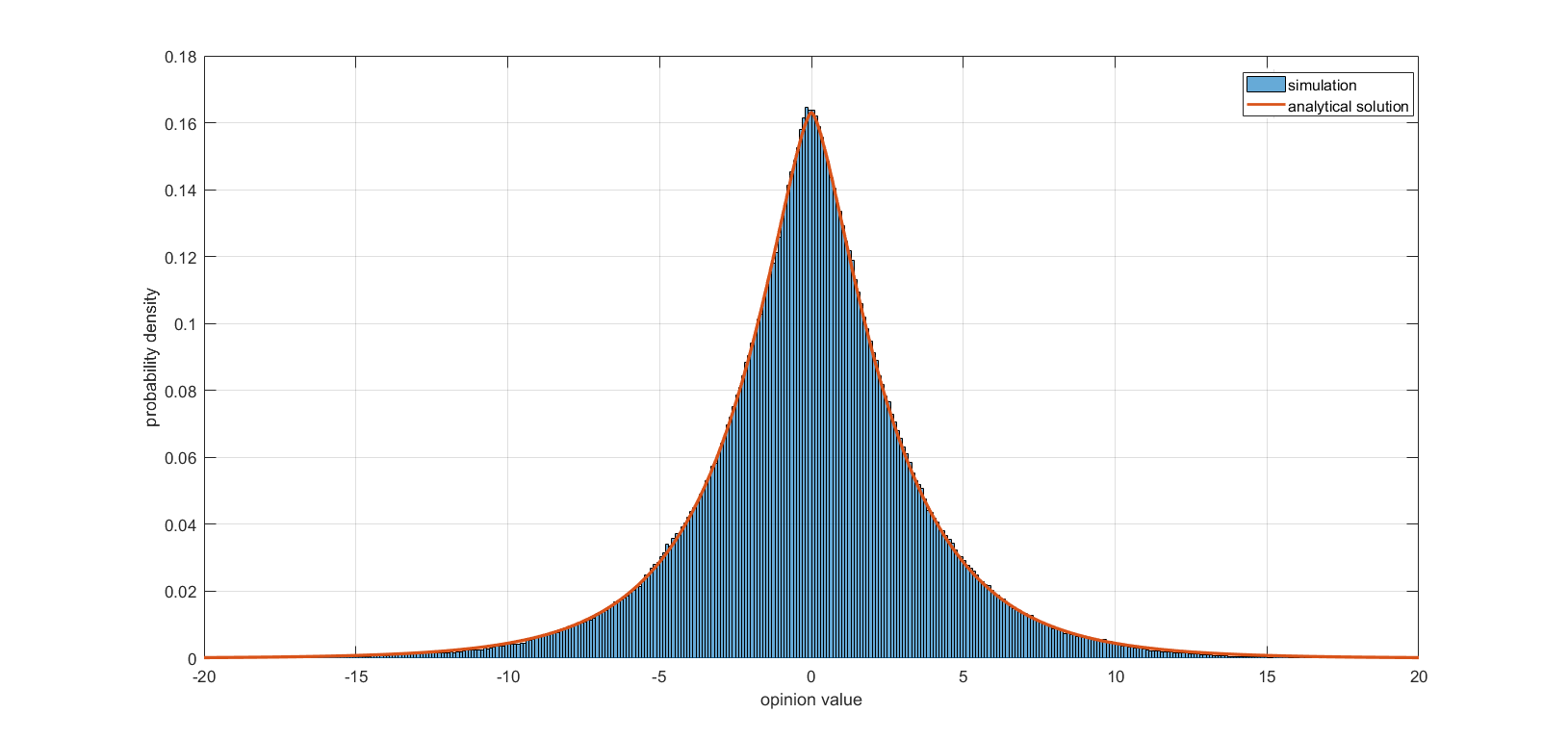} %[width=\linewidth]
	\caption{Simulation vs explicit solution when $\lambda=2.0$, $\sigma=3.0$, and $\alpha=0.5$.}
	\label{fig:simulation2}
\end{figure}
As expected, $p(x)$ does not depend on the initial state $X(0)$. The function $p(x)$ is non-negative
and bounded when $0\leq \alpha<2$. Non-negativity can be easily seen as the $\text{BesselK}(\nu,z)$ 
function is non-negative. Boundedness follows from the fact that
for $\theta>1$, since $a_n\to 0$ when $n\to \infty$, 
\begin{align*}
    p(x) \leq \sum_{n=0}^{\infty} \frac{C}{\theta^{n/2}} < +\infty,
\end{align*}
where $C$ is a universal constant. 

Since $M(1) = \frac{1}{2}$ from \eqref{eq_mellin}, we also have $\int p(x)dx = 1$.
%Note that $\text{BesselK}(\nu,z)\to \infty$ as $z\to 0$.
%But as $x\to 0$, one can easily see by using the asymptotic formula that $p(x)\sim x^0$,
%which implies that $0\leq p(x) \leq K$ for some positive $K$. More precisely,
%$p\left(|x| \right)$ is slowly varying as $x\to 0$. See \cite{resnick2013extreme}
%for details on slowly varying functions. Therefore one can naturally define $p(0)=\lim_{x\to 0}p(x)$.
A final remark is that $|x|^\frac{1}{2}\text{BesselK}\left(\frac{1}{2-\alpha},|x|^\frac{2-\alpha}{2}\right)$
has exponential tail as $|x|\to \infty$. Figure \ref{fig:simulation2} shows an example of the simulated histogram and the solution derived from Equation \eqref{eq_ode}.
\begin{comment}
\begin{figure}[!htb]
\centering
	\includegraphics[scale=0.3]{} %[width=\linewidth]
	\caption{Simulation vs explicit solution when $\lambda = 2.0$, $\sigma=3.0$, $\alpha=0$, and $\theta=\theta'=2.0$}
	\label{fig:simulation}
\end{figure}
\end{comment}

\subsection{Parameter Sensitivity}
In this section, we analyze the sensitivity of the analytic solution
proposed in Theorem \ref{thm:main2body_dependent} with respect to the parameters of the model.
Figure \ref{fig:param_sensitivity} shows a collection of plots of the density $p(x)$
when varying each given parameter and fixing the other parameters.
The baseline parameters are $\alpha=0.5, \lambda=2.0, \sigma=3.0, \theta=2.0$.
Here is the summary of the effect of $\alpha, \lambda, \sigma$, and $\theta$.
\begin{figure}[!hb]
\centering
	\includegraphics[scale=0.3]{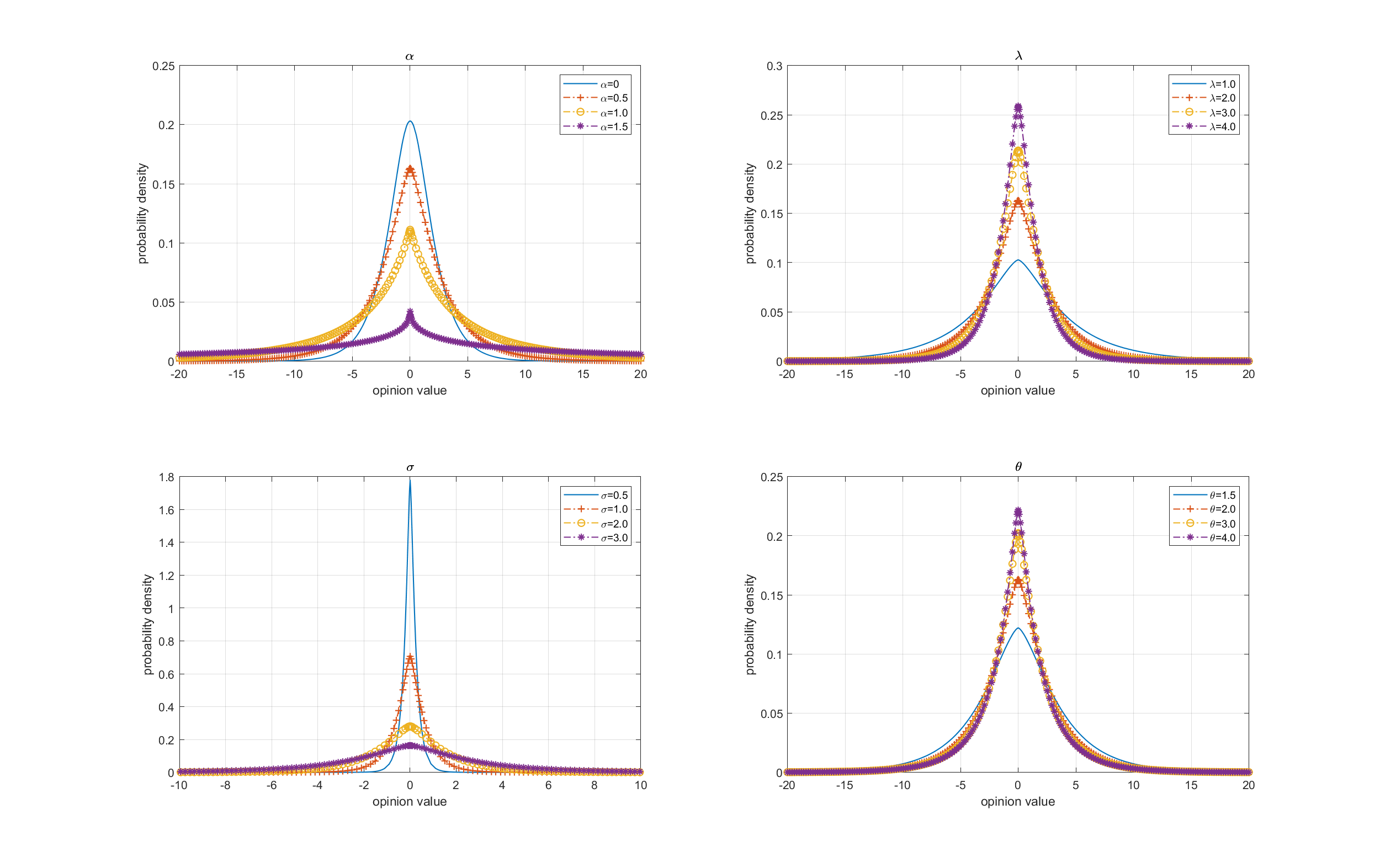} %[width=\linewidth]
	\caption{Sensitivity of parameters $\alpha, \lambda, \sigma, \theta$.}
	\label{fig:param_sensitivity}
\end{figure}
\begin{itemize}
\item  (Left-top) $\alpha$ : When the regular opinion is sufficiently far away from the stubborn opinion,
interactions are less likely as already discussed. This is clearly reinforced when $\alpha$ is larger.
This intuitively explains why the tail of $p(x)$ is heavier as $\alpha$ increases.
We see that the peak point $p(0)$ is also decreasing as $\alpha$ increases.
\item (Right-top) $\lambda$ : As $\lambda$ increase, the interaction rate increases proportionally,
which implies more interactions, and a density $p(x)$ which is more concentrated around the
opinion of the stubborn agent.
\item (Left-bottom) $\sigma$ : As $\sigma$ increases, the strength of self-belief increases, 
which naturally results in a widened of the shape of $p(x)$.
\item (Right-bottom) $\theta$ : As $\theta$ increase, the weight of the opinion of the stubborn opinion
increases, which forces the regular opinion to be closer to that the stubborn opinion.
\end{itemize}

\subsection{More on Case {\bf {(C2)}} with a Bias Term}
We gather here partial results on the ordinary differential equation for $p_+(x)$ in
the presence of a non zero bias term. In this case,
\begin{align*}
	\sigma^2\frac{{\mathrm d}^2 p_+(x)}{{\mathrm d} x^2} -2 \mu \frac{{\mathrm d} p_+(x)}{{\mathrm d} x}  = 2\lambda p_+(x) -2\theta\lambda p_+(\theta x).
\end{align*}
The Mellin transform yields the recurrence equation.
\begin{align*}
	\sigma^2 s(s+1)M(s) +2\mu (s+1)M(s+1) =2\lambda \left(1-\frac{1}{\theta^{s+1}}\right) M(s+2).
\end{align*}
Let $f(s)$ be defined by
$$M(s)=f(s)\Gamma(s)\left(\frac{\sigma^2}{2\lambda}\right)^{s/2}.$$ 
Then $f(s)$ satisfies the relation
\begin{align*}
	f(s)+\frac{\sqrt{2}\mu}{\sigma\sqrt{\lambda}}f(s+1)=\left( 1-\frac{1}{\theta^{s+1}}\right) f(s+2).
\end{align*}
Let $\omega:=\frac{\sqrt{2}\mu}{\sigma\sqrt{\lambda}}$. We simplify the form.
\begin{align*}
f(s)+\omega f(s+1)=\left( 1-\frac{1}{\theta^{s+1}}\right) f(s+2).
\end{align*}
By introducing $\Psi(s):=\frac{f(s+1)}{f(s)}$, the equation can be re-written as
\begin{align}
\label{eqquad}
\frac 1 {\Psi(s)} + \omega = a(s+1)\Psi(s+1),
\end{align}
where $a(s):= 1- \theta^{-s}$. On the other hand, we can find a unique positive solution of the equation (by introducing $\gamma(s)$)
$$ \frac 1 {\gamma(s)} + \omega = a(s) \gamma(s),$$
it is easy to see that this function $\gamma(s)$ is non-decreasing.
Let $\Sigma(s)= \frac{\Psi(s)}{\gamma(s)} -1$. From \eqref{eqquad}, we may further simplify this as
\begin{align}
\label{eqquad2}
\Sigma(s) = \frac{\xi(s)}{\zeta(s)+\Sigma(s+1)},
\end{align}
where
\begin{eqnarray*}
\xi(s):= \frac{1}{a(s)\gamma(s)\gamma(s+1)}, \qquad \zeta(s):= 1-\frac{\gamma(s)}{\gamma(s+1)}\ge 0.
\end{eqnarray*}
Hence $\Sigma(s)$ admits the continued fraction expansion
\begin{eqnarray*}
\Sigma(s) = \frac {\xi(s)} 
          {\zeta(s) + \frac {\xi(s+1)}
                         {\zeta(s+1) +\frac{\xi(s+2)}
                                      {\zeta(s+2) + \frac{\xi(s+3)}{\zeta(s+3)+\cdots}
                                      }
                         }
          }.
\end{eqnarray*}
From the definition of $\Psi(s)$ above, it follows that $f(s+1)=f(s)\gamma(s)(1+\Sigma(s))$.
Therefore, we may expand $f(s)$ as follows
\begin{align*}
f(s)= \phi \prod_{k=0}^\infty \frac 1{\gamma(s+k)(1+\Sigma(s+k))},
\end{align*}
where $\phi$ is a constant. Then we have a representation of $M(s)$.

\subsection{Summary of the Results and Questions on the Two-Agent Model}
\label{sec3.6}
Here we the list results obtained so far, some properties of interest, and some open-problems.
\begin{itemize}
\item For {\bf (C1)} with any bias term $\mu$, the stochastic process $X(t)$ admits a unique stationary regime. This stationary regime is ergodic (as a factor of a marked Poisson point process). We give a probabilistic representation of the stationary distribution  
in Proposition \ref{pro:probilistic_solution}.
For $\mu=0$, we also give an analytical solution given in
Theorem \ref{thm:main2body_dependent} for $\alpha=0$, namely
\begin{align*}
	p\left(x \right)=\phi \sum_{n=0}^{\infty} \frac{a_n}{\theta^n} e^{-\left(\frac{\sqrt{2\lambda}2\theta^n}{\sigma}\right)|x|},
\end{align*}
where $a_n=\prod_{k=1}^{n} \left(\frac{\theta^2}{1-\theta^{2k}} \right)$ with $a_0=1$ and $\phi = \frac{\sqrt{2\lambda}}{2\sigma}\left(\prod_{k=0}^{\infty}\left( 1- \frac{1}{\theta^{2(1+k)}} \right)\right)^{-1}$. Moreover, 
Equation \eqref{eq_mellin} can be simplified as
\begin{align}
\label{eq_mellin_zero_alpha}
	M(s)=\phi \Gamma(s)\left( \frac{\sigma^2}{2\lambda}\right)^{s/2} \prod_{k=0}^{\infty} \left( 1- \frac{1}{\theta^{s+1+2k}} \right).
\end{align}
\item For {\bf (C2)} with $\mu=0$ and $0\leq \alpha < 1$,
there exists a unique stationary regime for $X(t)$ and
this stationary process is ergodic. This follows from the fact that the process
sampled at jump times is a $\phi$-irreducible Markov chain \cite{meyn2012markov}.  
The stationary distribution of this stationary regime is given in
Theorem \ref{thm:main2body_dependent}.
We also have an analytical solution to the ODE characterizing the stationary regimes when $1\leq \alpha< 2$.
See Theorem \ref{thm:main2body_dependent}. However, we cannot connect this solution to
a dynamics defined pathwise. An interesting open question is about the meaning of this analytic solution
for $\alpha$ in this range.
\item For {\bf (C3)}, $X(t)$ is pathwise well-defined since the
stochastic intensity is bounded. The Markov analysis is of the same nature
as that alluded to above. The associated process is ergodic when $\alpha <2$.
This leads to a natural (open) question. Let the solution of $p_L(x)$ in \eqref{eq_ode}
with $\lambda_L(x)$. Do we have $\lim_{L\to\infty} p_L(x) = p(x)$ in Theorem \ref{thm:main2body_dependent},
e.g., when $1\leq \alpha <2$?
\end{itemize}

\iffalse
Figure \ref{fig:simulation2} and Figure \ref{fig:simulation3} show examples of the stationary distribution. We see that the simulated results and the explicit solutions coincide by visual inspection as well.
\begin{figure}[h]
\centering
	\includegraphics[scale=0.3]{alpha05hist.png} %[width=\linewidth]
	\caption{Simulation vs explicit solution when $\lambda=2.0$, $\sigma=3.0$, and $\alpha=0.5$.}
	\label{fig:simulation2}
\end{figure}
\begin{figure}[h]
	\includegraphics[scale=0.3]{} %[width=\linewidth]
	\caption{Simulation vs explicit solution when $\lambda=2.0$, $\sigma=3.0$, $\alpha=1.5$, and $K=10^{100}$.}
	\label{fig:simulation3}
\end{figure}
\fi

\section{Extension to Multi-agent and Multi-dimensional Models}\label{multi}
This section focuses on some extensions of the models to 1)  multiple agents and 2) 
multi-dimensional opinions. For the first extension, one can come up with many interesting
scenarios, e.g., based on a social interaction graphs. We illustrate the flexibility of
our opinion independent approach by solving a specific scenario with three agents.  
We then discuss higher dimensional opinions, as well as a natural mean-field model
where the opinion independent and opinion dependent interaction rate models are 
connected through a single model.

\subsection{A Three-agent Interaction Model}
Consider a scenario with three agents $X_1$, $X_2$, and $X_3$. Agents $X_1$ and $X_3$ are
stubborn with opinion values $X_1(t)=s_1\in\mathbb{R}$ and $X_3(t)=s_3\in \mathbb R$, whereas
Agent $X_2$ is regular. See Figure \ref{fig:three}.
\begin{figure}[h]
\begin{center}
%\begin{tikzpicture}[box/.style={draw,rounded corners,text width=4.5cm,align=center}]
%\node[box] (a) {Non-stubborn agent $X(t)$};
%\node[box,right=of a] (b) {Stubborn agent 1 $S^1_t=0$};
%\node[box,below=of a] (c) {Stubborn agent 2 $S^2_t=Y$};
%\draw[<->] (b) -- (a) ;
%\draw[<->] (c) -- (a) ;
%\end{tikzpicture}
\begin{tikzpicture}[scale=0.45,->,>=stealth',shorten >=1pt,mindmap,
  level 1 concept/.append style={level distance=130,sibling angle=30},
  extra concept/.append style={color=blue!50,text=black}]

  \begin{scope}[mindmap, concept color=orange, text=white]
    \node [scale = 0.4, concept] at (-7.5,0) (agent1) {Stubborn Agent $X_{1}=s_1$};
  \end{scope}

  \begin{scope}[mindmap, concept color=blue]

    % Combinatorics and discrete mathematics 
    \node [scale = 0.4,concept, text=white] at (0,0) (agent2) {Non-stubborn Agent $X_2$} ;
  \end{scope}
  
  \begin{scope}[mindmap, concept color=orange, text=white]
      \node [scale = 0.4, concept] at (7.5,0) (agent3) {Stubborn Agent $X_{3}=s_3$};
    \end{scope}

  % Connections of researchers to applied subfields

  \begin{scope}[draw=black,fill=black, decorate,decoration=circle connection bar]
     \path (agent1) edge (agent2);
     \path (agent3) edge (agent2);
  \end{scope}
\end{tikzpicture}
\caption{Three agent interaction model}\label{fig:three}
\end{center}
\end{figure}

Without loss of generality, we assume that $s_1<s_3$. Agent $X_2$'s diffusion has bias $\mu$
and variance $\sigma^2$. Agent $X_2$ interacts with the stubborn agent $X_1$ with intensity
$\Lambda_1(x)$ and with the stubborn agent $X_3$ with intensity $\Lambda_3(x)$.
For example, let $\Lambda_1(x)=\frac{\lambda_1}{|x-s_1|^{\alpha_1}}$ and
$\Lambda_3(x)=\frac{\lambda_3}{|x-s_3|^{\alpha_3}}$. The interactions of $X_2$ 
take place at the epochs of a point process $N_2(t)$ which is the superposition 
of two point processes $N_{1}(t)$ and $N_{3}(t)$ with stochastic intensities
$\Lambda_1(X_2(t-))$ and $\Lambda_3(X_2(t-))$ for interactions with $X_1$ and $X_3$, respectively.
At each interaction, $X_2$ updates its opinion by averaging it with the appropriate stubborn opinion.
We assume $\theta=\theta'=2$.

If both $\Lambda_1(X_2(t-))$ and $\Lambda_3(X_2(t-))$ are almost surely locally integrable,
we obtain the following non-local partial differential equation result.

\begin{proposition}\label{prop:threeagent}
Assume that for $i=1$ and $i=3$,
\begin{align*}
\mathbf{P}\left[\int_{a}^{b} \Lambda_i\left(X_2(t-)\right) {\mathrm d}t < +\infty,\quad \text{for any\ $0\leq a<b<+\infty$} \right]=1.
\end{align*}
Then, under the above assumptions, the density $p(t,x)\in C^2$ of $X_2(t)$ satisfies
the non-local partial differential equation:
	\begin{align*}%\label{eq_pde2}
		\frac{\partial p(t,x)}{\partial t} = \frac{\sigma^2}{2}\frac{\partial^2 p(t,x)}{\partial x^2} &- \mu \frac{\partial p(t,x)}{\partial x}  - \left[\Lambda_1(x) + \Lambda_3(x)\right] p(t,x)\\
		& +2\Lambda_1(2x) p_t(2x-s_1) + 2\Lambda_3(2x) p_t(2x-s_3).
	\end{align*}
\end{proposition}
\begin{proof}{[Sketch of the proof]}
The proof follows the same lines of thought as in Theorem \ref{thm:evolution}.
Interaction terms are added as they result from conditionally independent interactions.
\end{proof}
By letting $\frac{\partial p(t,x)}{\partial t} =0$, we have the ordinary differential
equation for the stationary distribution.
\begin{corollary}
Under the assumptions of Proposition \ref{prop:threeagent},
the stationary density $p(x)$ of agent $X_2$ 
satisfies the non-local ordinary differential equation
\begin{align}\label{eq_ode2}
\sigma^2\frac{{\mathrm d}^2 p(x)}{{\mathrm d} x^2} +2 \mu \frac{{\mathrm d} p(x)}{{\mathrm d} x} =
2\left[\Lambda_1(x) +\Lambda_3(x) \right] p(t,x) -4\Lambda_1(2x) p_t(2x-s_1) - 4\Lambda_3(2x) p_t(2x-s_3).
\end{align}
\end{corollary}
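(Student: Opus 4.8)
The plan is to read off \eqref{eq_ode2} from Proposition~\ref{prop:threeagent}, exactly as the stationary ODE \eqref{eq_ode} was obtained from the Fokker--Planck equation \eqref{eq_pde}. A stationary density $p$ for the Markov process $\{X_2(t)\}$ is, by definition, one such that the choice $p_0=p$ forces $p_t=p$ for all $t\ge 0$; hence $t\mapsto p_t(x)$ is constant and $\partial p_t(x)/\partial t\equiv 0$. Under the hypotheses of Proposition~\ref{prop:threeagent} --- the almost sure integrability of $\lambda_1(X_2(t-))$ and $\lambda_3(X_2(t-))$ on bounded intervals, together with $p\in C^2$ --- the density $p_t$ obeys the PDE \eqref{eq_pde2}, so it suffices to substitute $p_t\equiv p$ there and let the left-hand side vanish.

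Concretely, $\partial p_t/\partial t\equiv 0$ turns \eqref{eq_pde2} into
\begin{align*}
0 = \frac{\sigma^2}{2}\frac{d^2 p(x)}{dx^2} - \mu\frac{dp(x)}{dx} - \bigl[\lambda_1(x)+\lambda_3(x)\bigr]p(x) + 2\lambda_1(2x)p(2x-\mathbf{s_1}) + 2\lambda_3(2x)p(2x-\mathbf{s_3}),
\end{align*}
and multiplying through by $2$ and transposing the diffusion and drift terms to one side, the two non-local jump terms to the other, is \eqref{eq_ode2}. As a cross-check one can argue at the level of generators: the three-agent dynamics has infinitesimal generator
\begin{align*}
\mathcal{A}f(x) = \mu f'(x) + \frac{\sigma^2}{2}f''(x) + \lambda_1(x)\Bigl(f\bigl(\tfrac{x+\mathbf{s_1}}{2}\bigr)-f(x)\Bigr) + \lambda_3(x)\Bigl(f\bigl(\tfrac{x+\mathbf{s_3}}{2}\bigr)-f(x)\Bigr),
\end{align*}
i.e.\ the diffusion generator plus one jump generator per stubborn agent, and a stationary density is a zero of its adjoint $\mathcal{A}^*$; computing $\mathcal{A}^*$ by the same change of variables used in the proof of Theorem~\ref{thm:evolution} (now with the affine maps $x\mapsto (x+\mathbf{s_i})/2$) gives \eqref{eq_ode2} after multiplying by $2$.

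I do not expect a genuine obstacle in this argument: the rearrangement is routine, and the only delicate point --- the reason for the conditional phrasing in the statement --- is the legitimacy of inserting a stationary density into the PDE pointwise, which is precisely what the $C^2$ hypothesis and the integrability of the two stochastic intensities secure through Proposition~\ref{prop:threeagent}. What this corollary deliberately leaves untouched is the existence (and uniqueness) of a stationary distribution for the three-agent process, i.e.\ whether the two superposed interaction mechanisms confine $X_2$ enough for a stationary regime to exist; this would have to be studied separately, and the Mellin-transform machinery of the two-agent case does not obviously carry over to the presence of two distinct shifts $\mathbf{s_1}\neq\mathbf{s_3}$.
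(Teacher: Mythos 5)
Your argument is precisely the paper's (implicit) one: the corollary is read off from Proposition~\ref{prop:threeagent} by setting $\partial p_t/\partial t=0$ and rearranging, just as \eqref{eq_ode} was read off from \eqref{eq_pde}, and your generator cross-check reproduces the adjoint computation of Theorem~\ref{thm:evolution} with the affine jump maps $x\mapsto(x+\mathbf{s_i})/2$. The only caveat is that your (correct) rearrangement produces $\sigma^2 p'' - 2\mu p'$ on the left-hand side, consistent with the two-agent stationary ODE \eqref{eq_ode}, whereas \eqref{eq_ode2} as printed carries $+2\mu p'$ and retains the subscript $t$ on the right --- these are typographical slips in the paper, not defects in your derivation, so you should not claim your rearrangement literally ``is'' \eqref{eq_ode2} without flagging the sign.
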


We have no solution in the general power law case.
However, when $\mu=0$, $\Lambda_1(x)=q\lambda$, and $\Lambda_3(x)=(1-q)\lambda$ with $q\in[0,1]$,
i.e., in the opinion independent interaction rate case,
by following the same approach as in Section \ref{sec3.5}, the following probabilistic
representation of the solution can be obtained:
\begin{align}
X_2({+\infty}) = \left[ \sum_{j=0}^{\infty} \frac{V({j})}{2^j} + (s_3-s_1) \sum_{j=0}^{\infty} \frac{U(j)}{2^j} \right] + s_1,
\end{align}
where $V({j})$ follows an independent $N(\mu\Delta t_j, \sigma^2\Delta t_j)$ Gaussian distribution 
and $\Delta {t_j}\sim \text{Exponential}(\lambda)$ and $U(n)\sim \text{Bernoulli}(1-q)$. Let
$A= \sum_{j=0}^{\infty} \frac{V({j})}{2^j}$ and $B=  (s_3-s_1) \sum_{j=0}^{\infty} \frac{U(n)}{2^j}$.
Hence the stationary solution of $X_2(t)$ admits the following representation with two independent components:
\begin{align*}
X_2(+\infty)=A+B + s_1.
\end{align*}
We discussed the distribution of $A$ in Section \ref{sec3.5}.
Bhati et. al. studied the distribution of $B$ for $q\in[0,1]$ \cite{bhati2011distribution}.
The general form of $B$ is non-trivial. When $q=0.5$, $B \sim U[0,s_3-s_1]$ by matching each
realization of $B$ with the binary representation of real values in $[0,1]$ and multiplying
it by $s_3-s_1$, where $U[0,s_3-s_1]$ denotes the uniform distribution on $[0,s_3-s_1]$. So
$B+s_1\sim U[s_1,s_3]$.

\begin{proposition}\label{prop:three_body}
Let $g(x)=\frac{1}{s_3-s_1}$ be the density function of a uniform distribution on $[s_1,s_3]$. When
$\Lambda_1(x)=q\lambda$, $\Lambda_3(x)=(1-q)\lambda$, $\mu=0$ and $q=0.5$, the solution of \eqref{eq_ode2} is
\begin{align*}
p(x) = p^*(x) \star g(x)
\end{align*}
where $\star$ denotes convolution and $p^*(x)$ is the solution obtained in Theorem \ref{thm:main2body_dependent} 
when $\alpha=0$.
\end{proposition}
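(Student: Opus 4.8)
At bottom the statement is a repackaging of the embedded‑chain computation carried out just above it, plus one analytic identity for the uniform density $g$. I would organize the proof around a \emph{direct verification} that $p:=p^{*}\star g$ solves \eqref{eq_ode2}, and use the probabilistic picture only to explain why it is \emph{the} stationary density.

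\emph{Step 1: reduce to an identity for $g$.} Set $\mu=0$, $\lambda_1=\lambda_3=\lambda/2$, $\theta=2$, so that \eqref{eq_ode2} reads $\sigma^{2}p''=2\lambda p-2\lambda\,p(2x-\mathbf{s_1})-2\lambda\,p(2x-\mathbf{s_3})$. Since $g\in L^{1}$ and, by Theorem~\ref{thm:main2body}, $p^{*}\in C^{2}$ (in fact $C^{\infty}$: the corner of $\phi\sum_{n}a_n\theta^{-n}e^{-(\sqrt{2\lambda}\theta^{n}/\sigma)|x|}$ at the origin disappears because $\sum_{n}a_n=0$, an instance of Lemma~\ref{lem:infmulsum} with $\theta$ replaced by $\theta^{2}$ and $s=0$), differentiation passes through the convolution: $\sigma^{2}(p^{*}\star g)''=(\sigma^{2}p^{*\prime\prime})\star g$. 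Plugging in the two‑agent ODE $\sigma^{2}p^{*\prime\prime}=2\lambda p^{*}-4\lambda\,p^{*}(2\,\cdot\,)$ satisfied by $p^{*}$, this equals $2\lambda(p^{*}\star g)-4\lambda\,(p^{*}(2\,\cdot\,)\star g)$. Hence it suffices to show, for any integrable $f$,
\[
\bigl(f(2\,\cdot\,)\star g\bigr)(x)=\tfrac12\Bigl[(f\star g)(2x-\mathbf{s_1})+(f\star g)(2x-\mathbf{s_3})\Bigr].
\]

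\emph{Step 2: the two‑scale identity for $g$.} This is pure change of variables and reflects the refinement equation $g(x)=g(2x-\mathbf{s_1})+g(2x-\mathbf{s_3})$ obeyed by the uniform density on $[\mathbf{s_1},\mathbf{s_3}]$: splitting $[\mathbf{s_1},\mathbf{s_3}]$ at its midpoint realizes it as the disjoint union of the images of $x\mapsto\tfrac12(x+\mathbf{s_1})$ and $x\mapsto\tfrac12(x+\mathbf{s_3})$, which is the analytic shadow of the binary‑expansion identification $B+\mathbf{s_1}\sim U[\mathbf{s_1},\mathbf{s_3}]$ used above. Concretely, writing $g=\tfrac1{w}\mathbf{1}_{[\mathbf{s_1},\mathbf{s_3}]}$ with $w=\mathbf{s_3}-\mathbf{s_1}$, one has $\bigl(f(2\,\cdot\,)\star g\bigr)(x)=\int_{0}^{1}f(2x-2\mathbf{s_1}-wu)\,du$; cutting at $u=\tfrac12$ and rescaling each half produces exactly $\tfrac12(f\star g)(2x-\mathbf{s_1})$ and $\tfrac12(f\star g)(2x-\mathbf{s_3})$, using $2\mathbf{s_1}=\mathbf{s_1}+\mathbf{s_1}$ and $2\mathbf{s_1}+w=\mathbf{s_1}+\mathbf{s_3}$. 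Combining with Step~1 yields $\sigma^{2}p''=2\lambda p-2\lambda p(2x-\mathbf{s_1})-2\lambda p(2x-\mathbf{s_3})$, i.e. $p=p^{*}\star g$ solves \eqref{eq_ode2}; moreover $p$ is $C^{2}$ (indeed $C^{\infty}$), so it is admissible for the corollary stating \eqref{eq_ode2}.

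\emph{Step 3: why it is the stationary density, and the main obstacle.} To justify "\emph{the} solution" I would invoke the probabilistic side: the embedded chain of $X_2(t)$ is the strict contraction $Y\mapsto\tfrac12Y+(\text{independent noise})$ driven by a marked Poisson process, hence has a unique stationary law, namely that of $A+B+\mathbf{s_1}$ with $A\perp B$, where $A=\sum_{j}V(j)2^{-j}$ has density $p^{*}$ by Theorem~\ref{thm:main2body} (total rate $\lambda_1+\lambda_3=\lambda$) and, for $q=\tfrac12$, $B+\mathbf{s_1}\sim U[\mathbf{s_1},\mathbf{s_3}]$ has density $g$; independence comes from the mark sequence $(\xi(j))_j$ being independent of the interarrival times and of the Brownian increments. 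By PASTA (Lemma~\ref{lem:pasta}) this is the time‑stationary law of $X_2(t)$, whose density is therefore $p^{*}\star g$. There is no deep obstacle here; the one point demanding care is Step~2 — getting the refinement equation for $g$ with the correct normalizing constant, hence the factor $\tfrac12$ in front of the two shifted copies — together with checking the regularity of $p^{*}$ (and hence of $p^{*}\star g$) needed to apply the corollary. The uniqueness implicit in the wording is not furnished by the verification itself and must be imported from the contraction/PASTA argument of Step~3.
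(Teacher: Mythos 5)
Your argument is correct, but it reaches the proposition by a genuinely different route from the paper. The paper's justification is entirely the probabilistic computation preceding the statement: the embedded chain for the superposed Poisson clock of rate $\lambda_1+\lambda_3=\lambda$ gives $X_2(\infty)=A+B+\mathbf{s_1}$ with $A=\sum_j V(j)2^{-j}$ and $B=(\mathbf{s_3}-\mathbf{s_1})\sum_j\xi(j)2^{-j}$ independent, the law of $A$ is $p^*$ by Theorem \ref{thm:main2body}, and for $q=\frac12$ the binary-expansion identification gives $B+\mathbf{s_1}\sim U[\mathbf{s_1},\mathbf{s_3}]$; the density of an independent sum is a convolution, and that is the whole proof. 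Your Step 3 is exactly this argument (with the independence of the marks from the interarrival times and Brownian increments made explicit, which the paper glosses over). What you add in Steps 1--2 is a direct analytic verification that $p^*\star g$ satisfies \eqref{eq_ode2}, resting on the two-scale identity $\bigl(f(2\,\cdot)\star g\bigr)(x)=\frac12(f\star g)(2x-\mathbf{s_1})+\frac12(f\star g)(2x-\mathbf{s_3})$, i.e.\ the refinement equation of the uniform density. This buys something the paper does not supply: it closes the loop between the stochastic representation and the stationary ODE without relying on the unproved assertion that the time-stationary density of the SDE is a $C^2$ solution of \eqref{eq_ode2}; conversely, the paper's route is shorter and identifies the law directly. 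Two small points. First, your intermediate formula $\int_0^1 f(2x-2\mathbf{s_1}-wu)\,du$ should read $\int_0^1 f(2x-2\mathbf{s_1}-2wu)\,du$ (the substitution $y=\mathbf{s_1}+wu$ enters $f(2x-2y)$ with a factor $2$); the cut-at-$u=\frac12$-and-rescale step and the final identity are nevertheless correct. Second, your observation that $\sum_n a_n=0$ (Lemma \ref{lem:infmulsum} at $s=0$ with $\theta^2$ in place of $\theta$), so that $p^*$ has no kink at the origin and $(p^*\star g)''=p^{*\prime\prime}\star g$ is legitimate, is a genuine point of care that the paper never addresses.
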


Figure \ref{fig:simulation3body} compares a simulated solution and the solution 
established in Proposition \ref{prop:three_body}.
\begin{figure}[h]
\centering
	\includegraphics[scale=0.3]{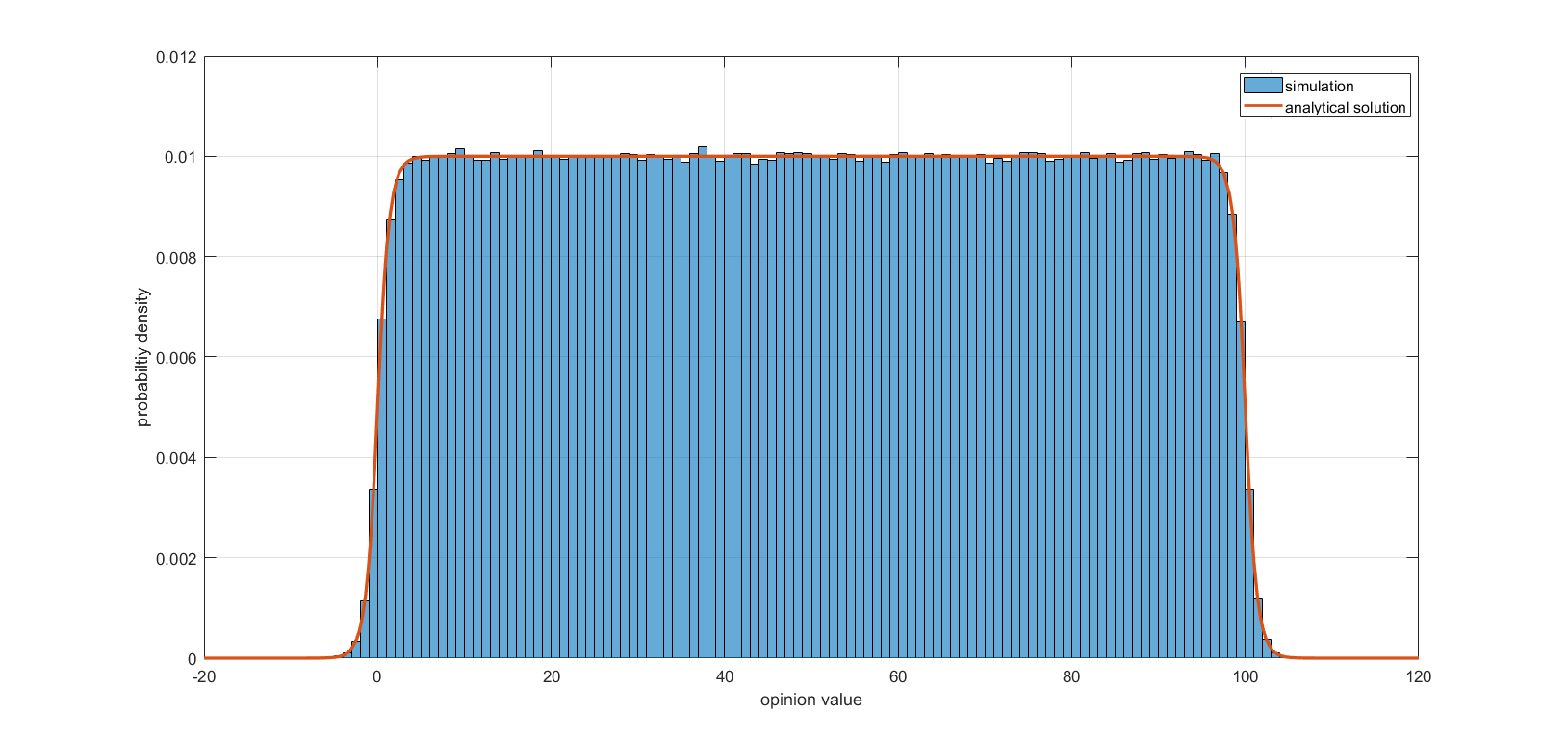}
	\caption{Three-body simulation vs analytically derived solution when $\lambda = 3.0$, $\sigma=2.0$, $q=0.5$, $\alpha=0$.}
	\label{fig:simulation3body}
\end{figure}

\subsection{Multi-dimensional Continuous Time Opinion Dynamics}
Let us come back to the two agent model. Assume that the regular agent $\mathbf{X}$ has
an opinion vector $\mathbf{X}(t) = \left(X_{1}(t), X_{2}(t),\cdots,X_{d}(t) \right)\in\mathbb{R}^d$ and updates its opinions by interacting with the stubborn agent. Assume that each component $X_{i}(t)$
follows an independent diffusion process with parameters $\mu_i$ and $\sigma_i$.
The stubborn agent $\mathbf{Z}$ has the opinion vector $\mathbf{Z}(t) =\left(0,0,\cdots, 0\right)\in\mathbb{R}^d$. 
The stochastic interactions are in terms of a multi-dimensional point process 
$\mathbf{N}(t)=(N_1(t), \cdots, N_d(t))$, with stochastic intensity $\Lambda_i(\mathbf{X}(t))$ for each $X_i$.
Assume that $\Theta'=(\theta_1',\cdots,\theta_d')$ and $1/\Theta'=(1/\theta_1',\cdots,1/\theta_d')$.
Let $\odot$ denote component-wise multiplication. Then
 \begin{align}\label{eq:dynamicsn}
 {\mathrm d}\mathbf{X}(t) = \boldsymbol{\mu} {\mathrm d}t + \boldsymbol{\sigma}\odot  {\mathrm d}\mathbf{W}(t) - \frac{1}{\Theta'} \odot \mathbf{X}({t-})  \odot \mathbf{N}({\mathrm d} t),
 \end{align}
where $\boldsymbol{\mu}=\left(\mu_1,\cdots,\mu_d \right)$, $ \boldsymbol{\sigma}=\left(\sigma_1,\cdots,\sigma_d\right)$. Note that
$\mathbf{X}({t-})$ is the vector-value approaching from the left of $\mathbf{X}(t)$.

As above, the only case that can be solved at this stage is the opinion-independent case. For instance,
when $\Lambda_i(\mathbf{X}(t))=\lambda_i$ and the components of $\mathbf{N}(t)$ are independent,
the components $X_i(t)$ are independent. Therefore, the distribution of $X_i(t)$ follows the
partial differential equation described in Theorem \ref{thm:evolution}, with $\mu_i$, $\sigma_i$,
$\theta_i$, and $\Lambda_i(x)=\lambda_i$. So the stationary distribution of $\mathbf{X}(t)$
is product form with marginals given by the
stationary distributions obtained in the opinion-independent case. 

When $\Lambda_i(\mathbf{X}(t))=\lambda$ and the components of $\mathbf{N}(t)$
are dependent Poisson point processes (for instance the very same point process
pathwise), then the {\emph{marginal}} stationary distributions of the components of
$\mathbf{X}(t)$ are available for each component, but we have no result on the
$d$-dimensional stationary distribution of $\mathbf{X}(t)$.

%However, since the coordinates in $\mathbf{X}_t$ share the same interaction clock, they are dependent. Because of this dependency, the full description of the stationary distribution of $\mathbf{X}_t$ cannot be obtained by this approach.

\subsection{A Mean-Field Interaction Model}\label{sec:mean_field}
In this subsection, we consider a $(d+1)$-agent model based on mean-field interactions.
This model features one stubborn agent with the zero-valued opinion (without loss of generality)
and $d$ regular agents. All regular agents are assumed to have the same dynamics in distribution.

Let $\mathbf{X}(t)=(X_1(t),\cdots, X_d(t))$ and $\mathbf{N}(t)=(N_1(t),\cdots, N_d(t))$. 
The dynamics of the regular agent $X_i(t)$ consists of an independent diffusion with bias $\mu$ and
diffusion coefficient $\sigma$. The main novelty is the assumption that $\{N_i(t)\}$ is a
collection of conditionally independent point processes with a {\em common}
stochastic intensity $\Lambda(x)$ of the form
\begin{align*} 
\Lambda(\mathbf{X}(t))= \frac{1}{d} \sum_{i=1}^d \frac {\widetilde \lambda}{|X_i(t)|^{\alpha}},
\end{align*}
where $\{N_i(t)\}$ are conditionally independent given $\mathbf{X}(t)$,
and where $\widetilde \lambda$ is a positive constant.
Hence the regular agents are only coupled by this shared stochastic intensity.

The model can be seen as a multi-dimensional variant of the model introduced in
the previous section where the interaction rate of agent $X_i$ with the stubborn agent is 
proportional to the {\em empirical moment} of order $-\alpha$ of the opinions of the regular agents.
For $i=1,\cdots,d$, each dynamics of $X_i$ is governed by the stochastic differential equation: 
\begin{align*} 
{\mathrm d}X_i(t) = \mu {\mathrm d}t+ \sigma {\mathrm d}W_i(t) - \frac{X_i(t-)}{\theta'}
{\mathrm d}N_i(t).%,\quad i=1,\cdots,d.
\end{align*}
%In these equations, $\{W_i(t)\}_i$ is assumed to be an i.i.d. sequence of standard Brownian motions. 

Assume that when $d$ tends to infinity,
the steady state of $\Lambda(\mathbf{X}(t))$ tends to a positive and finite constant, say $\kappa$,
so that the regular agents become asymptotically independent when $d$ tends to infinity. 
This assumption will be referred to as the mean-field limit hypothesis below.

\begin{figure}[h]
%\begin{minipage}{0.4\textwidth}
\begin{center}
\begin{tikzpicture}[scale=0.5,->,>=stealth',shorten >=1pt,mindmap,
  level 1 concept/.append style={level distance=130,sibling angle=30},
  extra concept/.append style={color=blue!50,text=black}]

  \begin{scope}[mindmap, concept color=orange, text=white]
    \node [scale = 0.38, concept] at (0,0) (agent0) {Stubborn Agent $S$};
  \end{scope}

  \begin{scope}[mindmap, concept color=blue]

    % Combinatorics and discrete mathematics 
    \node [scale = 0.4,concept, text=white] at (0,5) (agent1) {Agent $X_1$} ;
    
    \node [scale = 0.4,concept, text=white] at (3.5,3.5) (agent2) {Agent $X_2$} ;

    % Partial differential equations
    \node [scale = 0.4,concept, text=white] at (5,0) (agent3) {Agent $X_3$};

    \node [scale = 0.4,concept, text=white] at (3.5,-3.5) (agent4) {Agent $X_4$};
    
    \node [scale = 0.4,concept, text=white] at (0,-5) (agent5) {Agent $X_5$};
    
    \node [scale = 0.4,concept, text=white] at (-3.5,-3.5) (agent6) {Agent $X_6$};

    \node [scale = 0.4,concept, text=white] at (-5,0) (agent7) {Agent $X_7$};
    
    \node [scale = 0.4,concept, text=white] at (-3.5,3.5) (agent8) {Agent $X_d$};
  \end{scope}

  % Connections of researchers to applied subfields

  \begin{scope}[draw=black,fill=black, decorate,decoration=circle connection bar]
     \path (agent0) edge node {} (agent1);
     \draw (agent0) edge node {} (agent2);
	 \path (agent0) edge node {} (agent3);
     \draw (agent0) edge node  {} (agent4);
  \draw (agent0) edge node {} (agent5);
    \draw (agent0) edge node {} (agent6);
      \draw (agent0) edge node {} (agent7);
        \draw (agent0) edge node {} (agent8);
  \end{scope}
\end{tikzpicture}
\caption{$(d+1)$-agent mean-field limit model} \label{fig:mean_field}
\end{center}
%\end{minipage}\quad\quad\quad\quad
%\begin{minipage}{0.5\textwidth}

%\end{minipage}
\end{figure}

\begin{comment} % for minipage
order $-\alpha$ of the opinions of the regular agents.
For $i=1,\cdots,d$, each dynamics of $X_i$ is governed by the stochastic differential equation: 
\begin{align*} 
{\mathrm d}X_i(t) = \mu {\mathrm d}t+ \sigma {\mathrm d}W_i(t) - \frac{X_i(t-)}{\theta'}
N_i( {\mathrm d} t),\quad i=1,\cdots,d.
\end{align*}
%In these equations, $\{W_i(t)\}_i$ is assumed to be an i.i.d. sequence of standard Brownian motions. 

Assume that when $d$ tends to infinity,
the steady state of $\Lambda(\mathbf{X}(t))$ tends to a positive and finite constant, say $\kappa$,
so that the regular agents become asymptotically independent when $d$ tends to infinity. 
This assumption will be referred to as the mean-field limit hypothesis below.

For the following analytical derivation, we assume that $\mu=0$. Assuming that the mean-field limit exists, the constant $\kappa$ should coincide with the steady-state moment of order $-\alpha$ of the opinion of the regular agent
in the two-agent state-independent model analyzed in Section \ref{sec3.5}. 
\end{comment}
For the following analytical derivation, we assume that $\mu=0$. Assuming that the mean-field limit exists, the constant $\kappa$ should coincide with the steady-state moment of order $-\alpha$ of the opinion of the regular agent
in the two-agent state-independent model analyzed in Section \ref{sec3.5}. 
When taking $\alpha=0$ and $\lambda=\kappa$ in the result of Theorem \ref{thm:main2body_dependent},
the Mellin tranform of the stationary $X_i$ becomes by \eqref{eq_mellin_zero_alpha}
\begin{align}
\label{eq_mellin22}
	M_{\kappa}(s)=\phi \Gamma(s)\left( \frac{\sigma^2}{2\kappa}\right)^{s/2} \prod_{k=0}^{\infty} \left( 1- \frac{1}{\theta^{s+1+2k}} \right),
\end{align}
where 
\begin{align*}
	\phi = \frac{\sqrt{2\kappa}}{2\sigma}\left(\prod_{k=0}^{\infty}\left( 1- \frac{1}{\theta^{2(1+k)}} \right)\right)^{-1}.
\end{align*}
When it exists, the mean-field limit should hence satisfy some consistency equation:
the moment of order $-\alpha$ of the density $p(x)$ given in Theorem \ref{thm:main2body_dependent}
for the parameters $\theta,\sigma$ and $\lambda=\kappa$ should be such that
\begin{align}\label{eq:consistency}
2\widetilde \lambda M_{\kappa}(1-\alpha)=\kappa.
\end{align}
%Since the (non-complex) gamma function is defined on the positive real value,
%the solution $\varphi$ of the equation \eqref{eq:consistency} should necessarily exist when $\beta<1$.
%Denote by the solution of the equation \eqref{eq:consistency} be $\varphi^*$.
In view of \eqref{eq_mellin22}, when $0 < \alpha <1$,
Equation \eqref{eq:consistency} can be rewritten as
$$c\cdot \kappa^{\frac{\alpha}{2}}=\kappa,$$
for some constant $0<c<+\infty$ (the fact that this constant is finite requires the
assumption that $\alpha<1$ because of the singularity of the $\Gamma$ function). Hence,
for all $0<\alpha < 1$, the only positive and finite solution of this consistency
equation is $\kappa=c^{2/(2-\alpha)}$. 

We conclude that this mean-field limit, when it holds has a unique and well defined
solution for all $0<\alpha < 1$. It is beyond the scope of the present paper
to prove that the mean-field limit holds. However, let us stress that there
is numerical evidence that the mean-field hypothesis holds for all $0<\alpha < 1$. 

In contrast, when $\alpha \ge 1$, there is no non-degenerate solution to the self-consistency
equation, and there is in addition numerical evidence that the mean-field hypothesis does not hold. 
That is, when $d$ tends to infinity, the empirical moment of order $-\alpha$ of the
regular agent opinions does not tend to a finite limit.

%Therefore, we find the shared empirical stochastic intensity as follows.
%\begin{align*}
%	\varphi^* = \left[ 
%	\left( \frac{\sqrt{2} \lambda \Gamma(1-\alpha)}{\sigma}\right)
%	\left( \frac{\sigma^2}{2}\right)^{\frac{1-\alpha}{2}} 
%	\left( \prod_{k=0}^{\infty} \left( 1- \frac{1}{\theta^{2-\alpha+2k}} \right)\right)
%		\left(\prod_{k=0}^{\infty}\left( 1- \frac{1}{\theta^{2(1+k)}} \right)\right)^{-1}
%	\right]^{\frac{2}{2-\alpha}}.
%\end{align*}
%See the figure \ref{fig:mean_field} of a simulation example with the analytically computed density from $\varphi^*$.
%\begin{figure}[h]
%\centering
%	\includegraphics[scale=0.3]{mean_field.png} %[width=\linewidth]
%	\caption{Simulation histogram and analytically computed density of mean-field limit when $d=100000$, $\lambda =2.0$, $\sigma=3.0$, $\alpha=0.5$, and $\theta=\theta'=2.0$}
%	\label{fig:mean_field}
%\end{figure}
The statements on the case $0<\alpha < 1$ is illustrated in Figure \ref{fig:mean_field_vard} which plots
the plots the empirical histrogram of the opinions of the $d$ regular agents
for various choices of $d$ when $\alpha=1/2$. When $d\geq 10000$,
the simulated histogram is very close to the explicit solution.
Note that the mean-field limit is a very good approximation for
much smaller valued of $d$.

\begin{figure}[h]
\centering
	\includegraphics[scale=0.35]{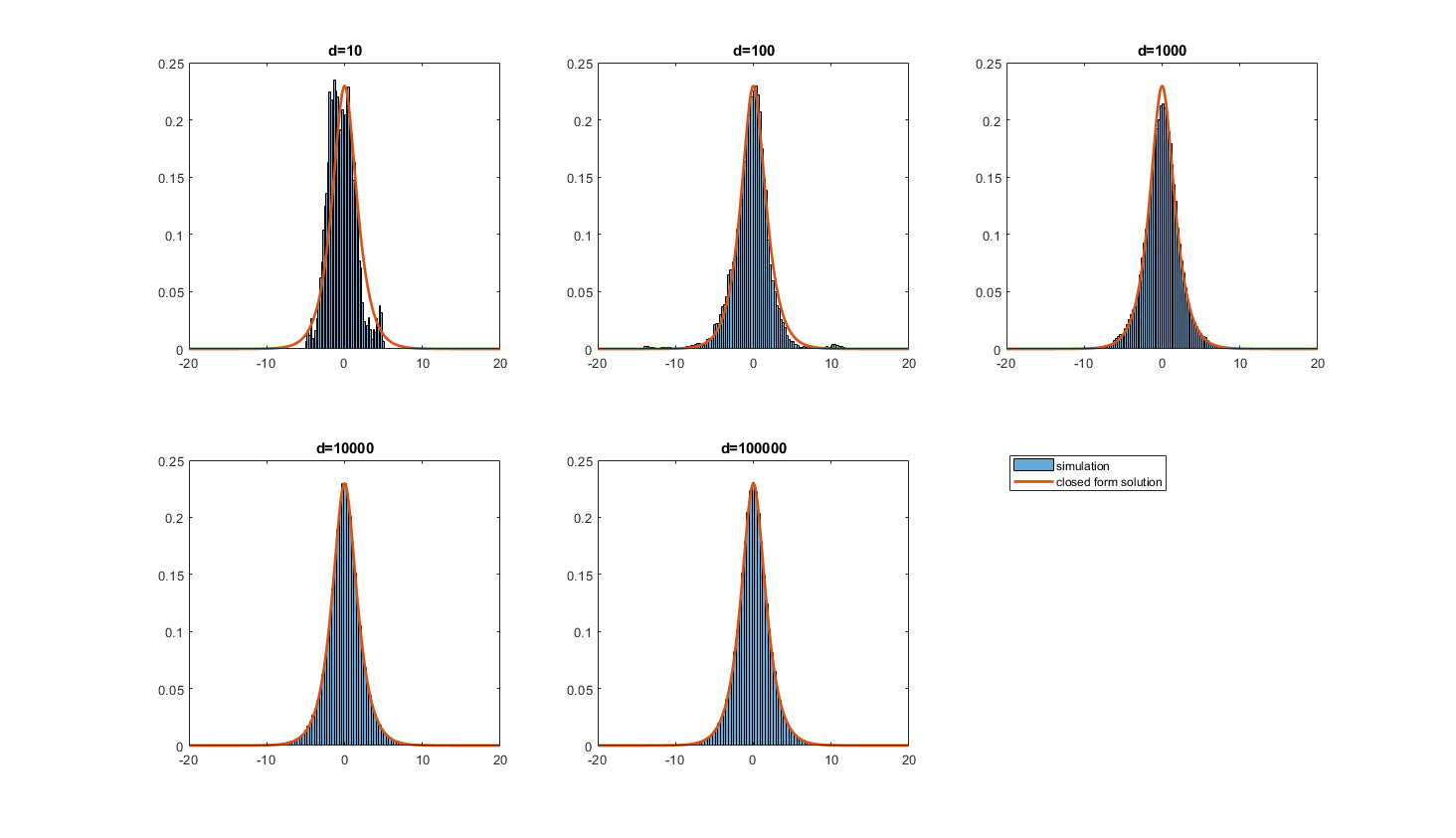} %[width=\linewidth]
	\caption{Simulation histograms and analytically computed density of mean-field limit when $d=10,100, 1000, 10000, 100000$ and $\lambda' =2.0$, $\sigma=3.0$, $\alpha=0.5$, and $\theta=\theta'=2.0$.}
	\label{fig:mean_field_vard}
\end{figure}

\section{Conclusion}
In this paper, we introduced a new continuous time model for opinion
dynamics that features power law confidence between agents subject to
diffusive forces. 
The steady state behavior of this type of dynamics was shown to
satisfy to non-local partial differential equations 
and was characterized using Mellin transforms. 
We first solved the two agent problem and then proposed some extensions to a multi-agent cases,
including a mean-field model that captures the essence of the rate dependent model.
In the presence of diffusive self-beliefs, this model fundamentally differs from the bounded confidence model
in that it leads to weak consensus for small enough interact exponents.
It also qualitative differs from that
of discrete-time models due to the possibility of accumulation of interaction events.
Our analysis leads to a good understanding of the case where the interaction
exponent is less than 1. In this case, there is no such accumulation of interaction
events, we give a pathwise construction of the dynamics and obtained an explicit formula for the stationary
distribution in question. The case where this exponent is between 1 and 2
remains mysterious as there is no pathwise construction for the dynamics
and yet some distributional solution to the non-local partial differential equation.

%If your paper includes appendices, then precede the first of them by the command
\appendix
\section{Proof of smoothness of the density}\label{sec:appendix1}
\begin{lemma}\label{lem:sato}
Let $\mathcal{F}_X(\xi)$ denote the characteristic function of the random variable $X$ on $\mathbb{R}$. If the characteristic function of $X$ satisfies
\begin{align*}
    \int_{\mathbb{R}} |\xi|^m|\mathcal{F}_X(\xi)| \mathrm{d}\xi < +\infty,
\end{align*}
for some $m\in\mathbb{N}$, then the density $f(x)$ of $X$ is of class $C^m$ and the derivatives of orders $0,\cdots, m$ of $f(x)$ converge to $0$ as $|x|\to \infty$, where $f(x)\in C^m$ means the $m$-th order derivative $f^{(m)}(x)$ exists for all $x\in \mathbb{R}$.
\end{lemma}
\begin{proof}
See \cite[Proposition 28.1]{ken1999levy}.
\end{proof}
\begin{lemma}\label{lem:smooth_density}
Assume $\theta>1$. Under ${\bf H}$ (from Section \ref{sec:fpe}), $X(t)$ has a smooth density $p(t,x)$ which satisfies
\begin{align}
p(t,x)\in C^{1,\infty}\left(\mathbb{R}\right)\text{ and, } \forall m\geq 1, \frac{\partial^m p}{\partial x^m}(t,x)\to 0 \text{ as $|x| \to\infty$,}
\end{align}
where $p(t,x)\in C^{1,\infty}$ means $p(t,x)$ is differentiable with respect to $t$, and $p(t,x)$ is differentiable with respect to $x$ infinitely many times.
\end{lemma}
\begin{proof}
Under ${\bf H}$, the point process $N$ exists and does not have accumulation points as proved in Theorem \ref{thm:separation}. Conditionally on $N_n(t)=\delta_{t_1}+\cdots+\delta_{t_n}$ with $0< t_1 < \cdots < t_n \leq t$,
\begin{align*}
X(t) = \sigma\left[ W(t)-W(t_n) + \frac{W(t_n)-W(t_{n-1})}{\theta} + \cdots + \frac{W(t_1) }{\theta^{n}}\right] + \frac{x_0}{\theta^{n}},
\end{align*}
so that the characteristic function of $X(t)$ is
\begin{align*}
    \mathcal{F}_n(\xi):=&\mathbf{E}\left[ e^{i\xi X(t)} \big| \{N_n(t)=\delta_{t_1}+\cdots+\delta_{t_n}\} \right]\\
    =& \exp{\left[ i\frac{x_0}{\theta^n}\xi -\frac{\sigma^2\xi^2}{2}\left(t-t_{n} + \frac{t_n-t_{n-1}}{\theta^2} + \cdots + \frac{t_{1}}{\theta^{2n}}\right) \right]}.
\end{align*}
Since $\theta>1$,
\begin{align*}
    |\mathcal{F}_n(\xi)| \leq \exp{\left(-\frac{\sigma^2\xi^2 t}{2\theta^{2n}}\right)}.
\end{align*}
Denote by $\mathcal{F}_{X(t)}(\xi)$ the characteristic function of $X(t)$ and by $\nu_n(\cdot)$ the Janossy measure of $N[0,t]$ (see \cite{daley2007introduction, baccelli2016entropy}). Then
\begin{align*}
    |\mathcal{F}_{X(t)}(\xi)| &\leq \sum_{n=0}^{\infty} \int_{\{0< t_1 < \cdots < t_n < t\} }|\mathcal{F}_n(\xi)| {\mathrm d}\nu_n ({\mathrm d}t_1\cdots {\mathrm d}t_n) \\
    &\leq \sum_{n=0}^{\infty} \exp{\left(-\frac{\sigma^2\xi^2 t}{2\theta^{2n}}\right)} \mathbf{P} \left[ N(t)=n \right]. %\leq \exp{\left(-\frac{\sigma^2\xi^2 t}{2}\right)}  < +\infty.%\leq \sum_{n=0}^{\infty} \exp{\left[-\frac{\sigma^2\xi^2 t}{2\theta^{2n}}\right]}.
\end{align*}
Hence for each $m\in \mathbb{N}$:
\begin{align*}
    \int_{\mathbb{R}} |\xi|^{m} |\mathcal{F}_{X(t)}(\xi)| {\mathrm d}\xi &\leq \int_{\mathbb{R}} |\xi|^{m}\sum_{n=0}^{\infty}\exp{\left(-\frac{\sigma^2\xi^2 t}{2\theta^{2n}}\right)} \mathbf{P} \left[ N(t)=n \right]{\mathrm d}\xi  \\
    &\leq \sum_{n=0}^{\infty} \frac{C(\sigma, m, t)}{\theta^{n(m-1)}} \mathbf{P} \left[ N(t)=n \right] \leq \sum_{n=0}^{\infty} \frac{C(\sigma, m, t)}{\theta^{n(m-1)}} < + \infty,
\end{align*}
for some constant $C(\sigma, m, t)$ \cite[Section 11.2]{krishnamoorthy2016handbook}. Applying Lemma \ref{lem:sato} concludes the $m$-th order differentiability and the decay on tails for each order of the partial derivative with respect to $x$ .

By \cite[Proposition 6.1.1]{bjork2011introduction}, $X(t)$ is a Markov process, then it satisfies Chapman-Kolmogorov equation. The differentiability with respect to $t$ is implied by applying Chapman-Kolmogorov equation. 
\end{proof}

\section*{Acknowledgements}
We thank the editor and anonymous reviewers for their constructive comments, which helped us to improve the manuscript. This research was funded by Department of Defense \#W911NF1510225 and by a Math+X award from the Simons Foundation \#197982 to The University of Texas at Austin. The work of Fran\c{c}ois Baccelli was supported by the ERC grant 788851.
% Place the text of your acknowledgements after the \acks command.
% \acks generates the heading "Acknowledgements".
% If you wish to make only one acknowledgement, use \ack.
% \ack generates the heading "Acknowledgement".

% Reference list
%
% References should be in the following form (or the BibTeX file
% apt.bst should be used):
%
% For a journal:
% Surname, Initial (year). Title of paper. {\em Journal title}
% {\bf Vol,} page--range.
%
% For a book:
% Surname, Initial (year). {\em Book title}. Publisher, Address.
%
% Note the following example of a reference list.

%\begin{thebibliography}{99}
%\footnotesize

%\bibitem{ref1}
%{\sc Ball, K. and Chain, H.} (1988). {\em Kurtosis: A Critical
%Review}, 2nd~edn. John Wiley, New York.

%\bibitem{ref2}
%{\sc Boyd, W.} (1978). Hyperbolic distributions. Doctoral Thesis,
%University of Boston School of Mathematics.

%\bibitem{ref3}
%{\sc Sichel, H.~S., Kleingeld, W.~J. and Assibey-Bonsu, W.}
%(1992).  A comparative study of three frequency-distribution
%models for use in ore valuation. {\em J. S. Afr. Inst. Min. Met.}
%{\bf 92,} 91--99.

%\end{thebibliography}

\bibliographystyle{plain}
\bibliography{bibbib}

\end{document}